\theoremstyle{plain}%
 \newtheorem{theorem}{Theorem}
\theoremstyle{remark}
\theoremstyle{definition}
\begin{document}

\begin{center}
{\large $q$-analogues of $\pi$-formulas due to Ramanujan and Guillera}

 \ 

{\textsc{John M. Campbell}} 

 \ 

\emph{Dedicated to my hero Jes\'{u}s Guillera and his mathematical legacy}

\end{center}

\begin{abstract}
 The first known $q$-analogues for any of the $17$ formulas for $\frac{1}{\pi}$ due to Ramanujan were introduced in 2018 by Guo and 
 Liu (J.\ Difference Equ.\ Appl.\ 29:505--513, 2018), via the $q$-Wilf--Zeilberger method. Through a ``normalization'' method, which 
 we refer to as {\tt EKHAD}-norma-lization, based on the $q$-polynomial coefficients involved in first-order difference equations 
 obtained from the $q$-version of Zeilberger's algorithm, we introduce $q$-WZ pairs that extend WZ pairs introduced by Guillera (Adv.\ 
 in Appl.\ Math.\ 29:599--603, 2002) (Ramanujan J.\ 11:41--48, 2006). We apply our {\tt EKHAD}-normalization method to prove four 
 new $q$-analogues for three of Ramanujan's formulas for $\frac{1}{\pi}$ along with $q$-analogues of Guillera's first two series for 
 $\frac{1}{\pi^2}$. Our normalization method does not seem to have been previously considered in any equivalent way in relation to 
 $q$-series, and this is substantiated through our survey on previously known $q$-analogues of Ramanujan-type series for 
 $\frac{1}{\pi}$ and of Guillera's series for $\frac{1}{\pi^2}$. We conclude by showing how our method can be adapted to further 
 extend Guillera's WZ pairs by introducing hypergeometric expansions for $\frac{1}{\pi^2}$. 
\end{abstract}

\noindent {\footnotesize \emph{Keywords}: 
 Wilf--Zeilberger method; Wilf--Zeilberger pair; difference equation; Ramanujan-type series, $q$-Pochhammer symbol, $q$-analogue.}

\vspace{0.1in}

\noindent {\footnotesize \emph{2020 Mathematics Subject Classification}: 33F10, 05A10.}

\vspace{0.1in}

\section{Introduction}
 The Wilf--Zeilberger (WZ) method~\cite{PetkovsekWilfZeilberger1996} is of great importance in combinatorics, special functions theory, 
 and computer science. One of the most notable developments in the history of WZ theory is given by the discoveries due to Guillera, 
 with a 2002 contribution of Guillera~\cite{Guillera2002} having led to a breakthrough in the areas of mathematics related to Ramanujan's 
 series for $\frac{1}{\pi}$~\cite[pp.\ 352--354]{Berndt1994PartIV}~\cite{Ramanujan1914}. Guillera applied the WZ method, 
 through the use of Zeilberger's {\tt EKHAD} package for the Maple computer algebra system, to introduce and prove the first 
 Ramanujan-type series for $\frac{1}{\pi^2}$. This motivates the development of techniques to build upon Guillera's applications of the 
 WZ method and the {\tt EKHAD} package. This provides the main purpose of our paper. 

 A discrete function $A(n ,k)$ is said to be \emph{hypergeometric} if $\frac{A(n+1, k)}{A(n, k)}$ and $\frac{A(n, k + 
 1)}{A(n, k)}$ are rational functions. A pair $(F, G)$ of bivariate, hypergeometric functions $F = F(n, k)$ and $G = G(n, k)$ is called a 
 \emph{WZ pair} if the discrete difference equation 
\begin{equation}\label{WZdifference}
 F(n+1, k) - F(n, k) = G(n, k + 1) - G(n, k) 
\end{equation}
 holds. For a WZ pair $(F, G)$, there exists a rational function $R(n ,k)$ such that $G(n, k) = R(n, k) F(n, k)$~\cite{WilfZeilberger1990}, 
 and $R(n, k)$ is referred to as the \emph{certificate} associated with $(F, G)$. Setting 
\begin{equation}\label{definitionH}
 H(n ,k) := F(n+1,n+k) + G(n,n+k), 
\end{equation}
 the relation 
\begin{equation}\label{mainGH}
 \sum_{n=0}^{\infty} G(n, 0) = \sum_{n=0}^{\infty} H(n, 0) 
\end{equation}
 holds under the assumption that both sums in \eqref{mainGH} converge and under the assumption that $\lim_{n \to \infty} \sum_{k 
 = 0}^{n - 1} F(n, k)$ vanishes~\cite{Zeilberger1993}. The relation in \eqref{mainGH} provides a key in Guillera's derivations of 
 remarkable formulas such as 
\begin{equation}\label{firstGuillera}
 \frac{128}{\pi ^2} = \sum_{n=0}^{\infty} \frac{ (-1)^{n} \binom{2n}{n}^{5} }{2^{20n}} 
 \big( 820 n^2 +180 n + 13 \big). 
\end{equation}
 The WZ pair $(F, G)$ employed by Guillera to prove \eqref{firstGuillera} was determined through the {\tt EKHAD} package and is 
 such that 
\begin{equation}\label{introGuilleraF}
 F(n, k) = 512 \frac{ (-1)^{k} }{ 2^{16n} 2^{4k} } \frac{n^3}{4n-2k-1} 
 \frac{ \binom{2n}{n}^{4} \binom{2k}{k}^{3} \binom{4n-2k}{2n-k} }{ \binom{2n}{k} \binom{n+k}{n}^{2} } 
\end{equation} 
 and $$ G(n, k) = \frac{(4 n - 2 k - 1) \left(84 k n+10 k+120 n^2+34 n+3\right)}{512 n^3} F(n, k). $$
 The problem of determining $q$-analogues of the WZ pairs applied by Guil-lera~\cite{Guillera2002} has led us to formulate an ``{\tt 
 EKHAD}-type'' method, which we refer to 
 as {\tt EKHAD}-normalization, 
 toward extending known WZ pairs. We apply our method
 to obtain four new $q$-analogues of three of Ramanujan's formulas for $\frac{1}{\pi}$
 and new $q$-analogues 
 for the first two known Ramanujan-type formulas 
 for $\frac{1}{\pi^2}$. 
 Devising $q$-analogues for $\pi$-formulas is 
 considered to be challenging~\cite{Wei2023}, which motivates the $q$-analogues
 introduced in this paper. 
 One of these $q$-analogues is highlighted as a motivating result in Section~\ref{subsectionmotivating}. Prior to this, we review some required preliminaries, as in Section~\ref{sectionPrelim}. 

  In Section~\ref{sectionsurvey}, we provide a survey on past    formulas that provide $q$-analogues of Ramanujan-type formulas for  
  $\frac{1}{\pi}$  and Guillera's formulas for $\frac{1}{\pi^2}$.   This survey is followed by an outline, in Section~\ref{EKHADoutline}, as to  
   our {\tt EKHAD}-normalization method.  This, in turn, is followed by    a review, in Section~\ref{sectionCompare}, of past methods  
    employed in the derivation of the $q$-analogues covered in Section~\ref{sectionsurvey},    and this is followed by a review of the method 
  of WZ seeds  recently due to Au~\cite{Au2025}. 
 Our main results are given in Sections~\ref{sectionqRamanujan}--\ref{sectionqgenerators}.
 We conclude in Section~\ref{sectionConclusion} by showing how our 
 {\tt EKHAD}-normalization method can be adapted
 to obtain new hypergeometric series for $\frac{1}{\pi^2}$ in the spirit of Guillera's discoveries. 

\subsection{Preliminaries}\label{sectionPrelim}
 The $\Gamma$-function is defined for arguments with a positive real part so that $\Gamma(x) = \int_{0}^{\infty} t^{x-1} e^{-t} \, dt$ 
 and its analytic continuation is obtained through the relation $\Gamma(x) = \frac{\Gamma(x+1)}{x}$. The Pochhammer symbol may 
 then be defined so that $(x)_{n} = \frac{\Gamma(x+n)}{\Gamma(x)}$, and we write 
\begin{equation}\label{ChuordinaryPochhammer}
 \left[ \begin{matrix} \alpha, \beta, \ldots, \gamma \vspace{1mm} \\ 
 A, B, \ldots, C \end{matrix} \right]_{n} = \frac{ (\alpha)_{n} (\beta)_{n} 
 \cdots (\gamma)_{n} }{ (A)_{n} (B)_{n} \cdots (C)_{n}}. 
\end{equation}
 A generalized hypergeometric series may then be defined so that 
\begin{equation}\label{definitionpFq}
 {}_{p}F_{q}\!\!\left[ \begin{matrix} 
 a_{1}, a_{2}, \ldots, a_{p} \vspace{1mm}\\ 
 b_{1}, b_{2}, \ldots, b_{q} \end{matrix} \ \Bigg| \ x 
 \right] = \sum_{n=0}^{\infty} 
 \frac{ \left( a_{1} \right)_{n} \left( a_{2} \right)_{n} 
 \cdots \left( a_{p} \right)_{n} }{ \left( b_{1} \right)_{n} 
 \left( b_{2} \right)_{n} \cdots \left( b_{q} \right)_{n} } 
 \frac{x^{n}}{n!}, 
\end{equation}
 and we define the \emph{convergence rate} of the ${}_{p}F_{q}$-series in \eqref{definitionpFq} as $x$. 
 A \emph{Ramanujan-type 
 series} is a series satisfying 
\begin{equation*}
 \sum_{n = 
 0}^{\infty} 
 \left[ \begin{matrix} \frac{1}{s}, \frac{1}{2}, 1 - \frac{1}{s} 
 \vspace{1mm} \\ 
 1, 1, 1 \end{matrix} \right]_{n} z^n (a + b n) = \frac{1}{\pi}, 
\end{equation*}
 for $s \in \{ 2, 3, 4, 6 \}$ and such that the parameters $a$, $b$, and $z$ are real algebraic numbers. 

 The {$q$-Pochhammer symbol} may be defined so that 
\begin{equation}\label{finiteqPochhammer}
 (a;q)_{n} = \prod_{k=0}^{n-1} (1- aq^{k}). 
\end{equation}
 We then let 
 $$ (a;q)_{\infty} = \prod_{k=0}^{\infty} \left( 1 - a q^{k} \right),$$
 and this allows us to extend the definition in \eqref{finiteqPochhammer}
 so as to allow for non-integer values for $n$, according to the relation 
 $$ (a;q)_{n} = \frac{ \left( a;q \right)_{\infty} }{ \left( a q^{n}; q \right)_{\infty}}, $$ 
 and we write 
 \begin{equation}\label{ChuqPochhammer}
 \left[ \begin{matrix} \alpha, \beta, \ldots, \gamma \vspace{1mm} \\ 
 A, B, \ldots, C \end{matrix} \, \Bigg| \, q \right]_{n} 
 = \frac{ \left( \alpha; q \right)_{n} 
 \left( \beta; q \right)_{n} \cdots \left( \gamma; q \right)_{n} }{ 
 \left( A; q \right)_{n} 
 \left( B; q \right)_{n} \cdots \left( C; q \right)_{n}}. 
\end{equation}
 A fundamental property concerning the $q$-Pochhammer symbol is such that 
\begin{equation}\label{limitqPochhammer}
 \lim_{q \to 1} \frac{ \left( q^{x}; q \right)_{n} }{ (1-q)^{n} } = (x)_{n}. 
\end{equation}

 We let the {$q$-bracket symbol} be such that $[n]_{q} = [n] = \frac{1-q^n}{1-q}$. 
 Also, the $q$-Gamma function may be defined so that 
 $$ \Gamma_{q}(x) = (1-q)^{1-x} \frac{ (q;q)_{\infty} }{ (q^x;q)_{\infty} }, $$ with 
\begin{equation}\label{limitqGammaGamma}
 \lim_{q\to 1^{-}} \Gamma_{q}(x) = \Gamma(x), 
\end{equation}
 and, by analogy with 
 \eqref{ChuordinaryPochhammer} and \eqref{ChuqPochhammer}, we write 
\begin{equation*}
 \Gamma_{q} \left[ \begin{matrix} \alpha, \beta, \ldots, \gamma \vspace{1mm} \\ 
 A, B, \ldots, C \end{matrix} \right] 
 = \frac{ \Gamma_{q}(\alpha) \Gamma_{q}(\beta) \cdots \Gamma_{q}(\gamma) }{ 
 \Gamma_{q}(A) \Gamma_{q}(B) \cdots \Gamma_{q}(C) }. 
 \end{equation*}
 \emph{Unilateral basic hypergeometric series} may be defined so that 
\begin{multline*}
 {}_{j}\phi_{k}\!\!\left[ \begin{matrix} 
 a_{1}, a_{2}, \ldots, a_{j} \vspace{1mm}\\ 
 b_{1}, b_{2}, \ldots, b_{k} \end{matrix} \ \Bigg| \ q; z 
 \right] \\ 
 = \sum_{n=0}^{\infty} \frac{z^n}{ (q;q)_{n} } 
 \left[ \begin{matrix} 
 a_{1}, a_{2}, \ldots, a_{j} \vspace{1mm} \\ 
 b_{1}, b_{2}, \ldots, b_{k} \end{matrix} \, \Bigg| \, q \right]_{n} 
 \left( (-1)^{n} q^{\binom{n}{2}} \right)^{1 + k - j}. 
\end{multline*}

\subsection{A motivating result}\label{subsectionmotivating}
 One of the main results introduced in this paper is the $q$-identity 
\begin{multline*}
 \sum_{n=0}^{\infty} (-1)^n 
 q^{n^2} \frac{ (1-q^{ -1 + 2n}) (2 q^{1+4n} - q^{1+2n} - 1 ) }{1 + q^{1+2n}} \\ 
 \times \left[ \begin{matrix} \frac{1}{q}, q, q, q, q \vspace{1mm} \\ 
 q^2, q^2, q^2, q^{1-2n}, q^{2+2n} \end{matrix} \, \Bigg| \, q^2 \right]_{n}
 = 
 \frac{(1-q)^2 (1+q)}{q} \left[ \begin{matrix} q, q \vspace{1mm} \\ 
 q^2, q^3 \end{matrix} \, \Bigg| \, q^2 \right]_{\frac{1}{2}}
\end{multline*}
 that we have obtained through our {\tt EKHAD}-normalization method described in Section~\ref{EKHADoutline} below. 
 By dividing both sides of the above equality 
 by $(1-q)^2$ and by then letting $q$ approach $1$, 
 and by then applying the limiting relation in \eqref{limitqPochhammer}, 
 we obtain the series 
 $$ -\frac{1}{2} \sum_{n=0}^{\infty} (-1)^{n} (2 n-1) (6 n+1)
 \left[ \begin{matrix} -\frac{1}{2}, \frac{1}{2}, \frac{1}{2}, \frac{1}{2}, \frac{1}{2}
 \vspace{1mm} \\ 
 1, 1, 1, \frac{1}{2} - n, 1 + n \end{matrix} \right]_{n}, $$
 and this may be rewritten as 
\begin{equation}\label{halfRamanujan}
 \frac{1}{2} \sum_{n=0}^{\infty} \left( \frac{1}{4} \right)^{n} 
 \left[ \begin{matrix} \frac{1}{2}, \frac{1}{2}, \frac{1}{2}
 \vspace{1mm} \\ 
 1, 1, 1 \end{matrix} \right]_{n} (6n+1), 
\end{equation}
 and, as shown by Ramanujan~\cite[pp.\ 352--354]{Berndt1994PartIV}~\cite{Ramanujan1914} in an equivalent way, 
 the hypergeometric series in \eqref{halfRamanujan}
 reduces to $\frac{2}{\pi}$. We obtain the same evaluation by dividing
 the right-hand side of the above $q$-identity by $(1-q)^2$
 and by again letting $q$ approach $1$, 
 according to the limiting property of the $q$-Gamma function shown in \eqref{limitqGammaGamma}. 

 We prove the above $q$-series identity in 
 Section~\ref{sectionqgenerators}, and our main results
 are summarized in Table~\ref{tableorg}. 

\begin{table}[h!]
	\centering
	\begin{tabular}{|c c c c|} 
		\hline
 	 Constant & Convergence rate & Guillera's WZ pair & New $q$-analogue \\ [0.5ex] 
 $\frac{1}{\pi}$ & $-\frac{1}{4}$ & \cite{Guillera2002} & Theorem~\ref{qnegquarttheorem} \\ [1.5ex] 
 $\frac{1}{\pi}$ & $\frac{1}{64}$ & \cite{Guillera2002} & Theorem~\ref{q64theorem} \\ [1.5ex] 
 $\frac{1}{\pi^2}$ & $\frac{1}{16}$ & \cite{Guillera2002} & Theorem~\ref{qGuillerafirst} \\ [1.5ex]
 $\frac{1}{\pi^2}$ & $-\frac{1}{1024}$ & \cite{Guillera2002} & Theorem~\ref{qGuillerasecond} \\ [1.5ex] 
 $\frac{1}{\pi}$ & $\frac{1}{4}$ & \cite{Guillera2006} & Theorem~\ref{Theoremposquart1} \\ [1.5ex]
	 	 $\frac{1}{\pi}$ & 
 $\frac{1}{4}$ & \cite{Guillera2006} & Theorem~\ref{Theoremposquart2} \\ [1.5ex] 
		\hline 
	\end{tabular}
	\caption{Organization of the main results. }
	\label{tableorg}
\end{table}

\section{Comparison with previous methods}
 The below survey, 
 the below summary of our {\tt EKHAD}-normalization method, 
 and the below review of previously known methods in the derivation of $q$-analogues of 
 Ramanujan-type formulas for $\frac{1}{\pi}$ and Guillera's formulas for $\frac{1}{\pi^2}$ 
 provide a demonstration of the innovative nature of 
 the {\tt EKHAD}-normalization method we introduce. 

\subsection{Survey}\label{sectionsurvey}
 As noted by Guo and Zudilin~\cite{GuoZudilin2018} the first known $q$-analogues for any of the $17$ formulas for $\frac{1}{\pi}$ 
 due to Ramanujan were introduced by Guo and Liu in 2018~\cite{GuoLiu2018}. 
 The corresponding research contribution by Guo and Liu~\cite{GuoLiu2018} 
 may thus be seen as seminal in areas of mathematics related to $q$-analogues for $\pi$-formulas, 
 and this motivates our survey and our comparison with our new normalization method. 

 Guo and Liu~\cite{GuoLiu2018} 
 introduced and proved $q$-analogues for the Ramanujan-type formulas for $\frac{1}{\pi}$ 
\begin{equation}\label{eqRamanujanposquart}
 \sum_{n = 0}^{\infty} \left( \frac{1}{4} \right)^{n} 
 \left[ \begin{matrix} \frac{1}{2}, \frac{1}{2}, \frac{1}{2} \vspace{1mm} \\ 
 1, 1, 1 \end{matrix} \right]_{n} (6n+1) = \frac{4}{\pi}
\end{equation}
 and 
\begin{equation}\label{Ramanujantypenegeighth}
 \sum_{n = 0}^{\infty} \left( -\frac{1}{8} \right)^{n} 
 \left[ \begin{matrix} \frac{1}{2}, \frac{1}{2}, \frac{1}{2} \vspace{1mm} \\ 
 1, 1, 1 \end{matrix} \right]_{n} (6n+1) = \frac{2\sqrt{2}}{\pi}. 
\end{equation}
 The Guo--Liu $q$-analogues of \eqref{eqRamanujanposquart} 
 and \eqref{Ramanujantypenegeighth} are 
\begin{equation}\label{firstGuoLiuq}
 \sum_{n=0}^{\infty} 
 q^{n^2} [6n+1] 
 \frac{ (q; q^2 )_{n}^{2} (q^2;q^4)_{n} }{ (q^4;q^4)_{n}^{3} } = 
 (1 + q) \left[ \begin{matrix} q^2, q^6 \vspace{1mm} \\ 
 q^4, q^4 \end{matrix} \, \Bigg| \, q^4 \right]_{\infty} 
\end{equation} 
 and 
\begin{equation}\label{GuoLiusecond}
 \sum_{n=0}^{\infty} 
 (-1)^n q^{3n^2} [6n+1] 
 \frac{ ( q; q^2 )_{n}^{3} }{ \left( q^{4}; q^{4} \right)_{n}^{3} 	 }
 = \left[ \begin{matrix} q^3, q^5 \vspace{1mm} \\ 
 q^4, q^4 \end{matrix} \, \Bigg| \, q^4 \right]_{\infty}, 
\end{equation}
 respectively. 

 What is regarded as the simplest out of Ramanujan's formulas for $\frac{1}{\pi}$ is 
\begin{equation}\label{simplestRamanujan}
 \sum_{n = 
 0}^{\infty} \left( -1 \right)^{n} 
 \left[ \begin{matrix} \frac{1}{2}, \frac{1}{2}, \frac{1}{2} \vspace{1mm} \\ 
 1, 1, 1 \end{matrix} \right]_{n} (4n+1) = \frac{2}{\pi}
\end{equation}
 and was included in Ramanujan's first letter to Hardy~\cite[pp.\ 23--24]{Berndt1994ParII}~\cite[p.\ xxvi]{Ramanujan1962} 
 and was originally introduced and proved by Bauer via a Fourier--Legendre expansion~\cite{Bauer1859}. 
 A notable follow-up to the research contribution of 
 Guo and Liu~\cite{GuoLiu2018} is due to Guo and Zudilin~\cite{GuoZudilin2018}, 
 who noted that a classical basic hypergeometric series identity 
 known as Jackson's formula can be used to provide 
 the $q$-analogue 
 $$ \sum_{n=0}^{\infty} (-1)^n q^{n^2} [4n+1] 
 \frac{ (q;q^2)_{n}^{3} }{ (q^2;q^2)_{n}^{3} } 
 = \left[ \begin{matrix} q, q^3 \vspace{1mm} \\ 
 q^2, q^2 \end{matrix} \, \Bigg| \, q^2 \right]_{\infty} $$ 
 of \eqref{simplestRamanujan}, and who introduced and proved 
 a $q$-analogue of Ramanujan's formula 
 \begin{equation}\label{Ramanujannegquart}
 \sum_{n = 0}^{\infty} \left( -\frac{1}{4} \right)^{n} 
 \left[ \begin{matrix} \frac{1}{4}, \frac{1}{2}, \frac{3}{4} \vspace{1mm} \\ 
 1, 1, 1 \end{matrix} \right]_{n} (20 n + 3) = \frac{8}{\pi}, 
\end{equation}
 namely 
\begin{multline*}
 \sum_{n=0}^{\infty} 
 (-1)^n q^{2n^2} 
 \frac{ \big( q^2;q^4 \big)_{n}^{2} (q;q^2)_{2n} }{ \big( q^4;q^4 \big)_{n}^2 
 (q^4;q^4)_{2n} } \Bigg( [8n+1] \\ 
 + [4n+1] \frac{q^{4n+1}}{1+q^{4n+2}} \Bigg) = (1+q) 
 \left[ \begin{matrix} q^2, q^6 \vspace{1mm} \\ 
 q^4, q^4 \end{matrix} \, \Bigg| \, q^4 \right]_{\infty}, 
\end{multline*}
 together with a $q$-analogue of Ramanujan's formula 
 \begin{equation}\label{Ramanujan1over9}
 \sum_{n = 
 0}^{\infty} \left( \frac{1}{9} \right)^{n} 
 \left[ \begin{matrix} \frac{1}{4}, \frac{1}{2}, \frac{3}{4} \vspace{1mm} \\ 
 1, 1, 1 \end{matrix} \right]_{n} (8 n + 1) = \frac{2\sqrt{3}}{\pi}, 
\end{equation}
 namely 
 $$ \sum_{n=0}^{\infty} q^{2n^2} 
 \frac{ (q;q^2)_{n}^{2} (q;q^2)_{2n} }{(q^6;q^6)_{n}^2 (q^2;q^2)_{2n} } 
 [8n+1] = \left[ \begin{matrix} q^3 \vspace{1mm} \\ 
 q^2 \end{matrix} \, \Bigg| \, q^2 \right]_{\infty} 
 \left[ \begin{matrix} q^3 \vspace{1mm} \\ 
 q^6 \end{matrix} \, \Bigg| \, q^6 \right]_{\infty}. $$

 Subsequent to the work of 
 Guo and Zudilin~\cite{GuoZudilin2018}, 
 Guillera~\cite{Guillera2018} proved a $q$-analogue of the Ramanujan-type formula 
 \begin{equation}\label{Ramanujantypeneg512}
 \sum_{n = 0}^{\infty} 
 \left( -\frac{27}{512} \right)^{n} 
 \left[ \begin{matrix} \frac{1}{6}, \frac{1}{2}, \frac{5}{6} \vspace{1mm} \\ 
 1, 1, 1 \end{matrix} \right]_{n} (154 n + 15) = \frac{32\sqrt{2}}{\pi}, 
\end{equation}
 namely 
\begin{multline*}
 \sum_{n=0}^{\infty} (-1)^n q^{7n^2} 
 \frac{ (q;q^2)_{3n} (q;q^2)_{n}^2 }{ (q^4;q^4)_{2n}^{2} (q^4;q^4)_{n} } \Bigg( [10n+1] \\ 
 + \frac{ [6n+1] }{ [4n+4] } \frac{ 
 [6n+3] q^{14n + 7} - [10n+7] q^{10n+3} }{ (1+q^{2n+1})^{2} (1 + q^{4n+2})^{2} } \Bigg) = 
 \left[ \begin{matrix} q^3, q^5 \vspace{1mm} \\ 
 q^4, q^4 \end{matrix} \, \Bigg| \, q^4 \right]_{\infty}, 
\end{multline*}
 together with a $q$-analogue of the Ramanujan series 
 \begin{equation*}
 \sum_{n = 
 0}^{\infty} 
 \left( -\frac{1}{48} \right)^{n} 
 \left[ \begin{matrix} \frac{1}{4}, \frac{1}{2}, \frac{3}{4} \vspace{1mm} \\ 
 1, 1, 1 \end{matrix} \right]_{n} (28 n + 3) 
 = \frac{16\sqrt{3}}{3\pi}, 
\end{equation*}
 namely
\begin{multline*}
 \sum_{n=0}^{\infty} (-1)^{n} q^{5n^2} 
 \frac{ (q;q^2)_{2n}^2 (q;q^2)_{n} (q^3;q^6)_{n} }{ 
 (q^6;q^6)_{n}^{2} (q^2;q^2)_{4n} } \Bigg( [10n + 1] \\ 
 + \frac{q^{8n + 2} [4n+1]}{(1+q^{2n+1})(1+q^{4n+1})(1+q^{4n+2})} \Bigg) = 
 \left[ \begin{matrix} q^3 \vspace{1mm} \\ 
 q^2 \end{matrix} \, \Bigg| \, q^2 \right]_{\infty} 
 \left[ \begin{matrix} q^3 \vspace{1mm} \\ 
 q^6 \end{matrix} \, \Bigg| \, q^6 \right]_{\infty}. 
 \end{multline*}

 A corrected version of a $q$-analogue of Ramanujan's formula \eqref{eqRamanujanposquart} due to Guo 
 in 2020 (cf.~\cite[Theorem 1.1]{Guo2020}) is such that 
 $$ \sum_{n=0}^{\infty} 
 q^{2n^2} \frac{ ( q;q^2 )_{n}^4 }{ ( q^2; q^2 )_{n}^{2} (q^2;q^2)_{2n} } 
 \left( [4n+1] - \frac{q^{4n+1} [2n+1]^2}{ [4n+2] } \right) 
 = \left[ \begin{matrix} q, q^3 \vspace{1mm} \\ 
 q^2, q^2 \end{matrix} \, \Bigg| \, q^2 \right]_{\infty}. $$ 
 Chu~\cite{Chu2018} and, subsequently, Chen and Chu~\cite{ChenChu2021RJ}, obtained $q$-analogues of Rama-nujan-type
 formulas for $\frac{1}{\pi}$ and Guillera's formulas for $\frac{1}{\pi^2}$, 
 including an equivalent version of the Guo formula given above, 
 with similar $q$-analogues being given for 
 the Ramanujan formula in \eqref{Ramanujannegquart} and the Ramanujan formula 
\begin{equation}\label{Ramanujan1over64}
 \sum_{n = 0}^{\infty} 
 \left( \frac{1}{64} \right)^{n} 
 \left[ \begin{matrix} \frac{1}{2}, \frac{1}{2}, \frac{1}{2} \vspace{1mm} \\ 
 1, 1, 1 \end{matrix} \right]_{n} (42n+5) 
 = \frac{16}{\pi}. 
\end{equation}
 Chu~\cite{Chu2018} (cf.~\cite{ChenChu2021RJ}) also introduced and proved the $q$-analogue 
\begin{multline*}
 \sum_{n=0}^{\infty} (-1)^n q^{\frac{n^2}{2}} 
 \frac{ \big(q^{\frac{1}{2}}; q\big)_{n}^{5} }{ \big( -q^{\frac{1}{2}}; q^{\frac{1}{2}} \big)_{2n} 
 (q;q)_{n}^{5} } \\ 
 \times \frac{1 + q^{n + \frac{1}{2}} + q^{3n+\frac{1}{2}} + q^{4n+1} 
 - 4 q^{2n + \frac{1}{2}} }{ \big( 1-q^{\frac{1}{2}} \big) 
 \big( 1- q^{\frac{3}{2}} \big) \big( 1 + q^{n + \frac{1}{2}} \big) } = 
 \Gamma_{q} \left[ \begin{matrix} 
 1, 1, 1, 1 \vspace{1mm} \\ 
 \frac{1}{2}, \frac{1}{2}, \frac{1}{2}, \frac{5}{2} \end{matrix} \right] 
 \end{multline*}
 of the Guillera formula~\cite{Guillera2006}
\begin{equation*}
 \sum_{n = 
 0}^{\infty} 
 \left( -\frac{1}{4} \right)^{n} 
 \left[ \begin{matrix} 
 \frac{1}{2}, \frac{1}{2}, \frac{1}{2}, \frac{1}{2}, \frac{1}{2} \vspace{1mm} \\ 
 1, 1, 1, 1, 1 \end{matrix} \right]_{n} (20n^2 + 8 n + 1) 
 = \frac{8}{\pi^2}, 
\end{equation*}
 along with $q$-analogues of the Guillera formula in \eqref{firstGuillera} and the formula 
 \begin{equation}\label{Guillera16squared}
 \sum_{n=0}^{\infty} \left( \frac{1}{16} \right)^{n} 
 \left[ \begin{matrix} 
 \frac{1}{4}, \frac{1}{2}, \frac{1}{2}, \frac{1}{2}, \frac{3}{4} \vspace{1mm} \\ 
 1, 1, 1, 1, 1 \end{matrix} \right]_{n} (120 n^2+34 n+3) = \frac{32}{\pi ^2}. 
\end{equation}
 introduced and proved by Guillera~\cite{Guillera2002}. 
 Chen and Chu~\cite{ChenChu2021IJNT} have also introduced a $q$-analogue of 
 the formula 
\begin{equation*}
 \sum_{n=0}^{\infty} 
 \left( \frac{27}{64} \right)^{n} 
 \left[ \begin{matrix} 
 \frac{1}{3}, \frac{1}{2}, \frac{1}{2}, \frac{1}{2}, \frac{2}{3} \vspace{1mm} \\ 
 1, 1, 1, 1, 1 \end{matrix} \right]_{n} \left( 74n^2 + 27 n + 3 \right) 
 = \frac{48}{\pi ^2} 
 \end{equation*}
 introduced by Guillera~\cite{Guillera2011}, namely 
 \begin{multline*} 
 \frac{ \big( q^{\frac{3}{2}}, q \big)_{\infty}^{4} }{ 
 (q;q)_{\infty}^{2} (q^2;q)_{\infty}^{2} } {}_{2}\phi_{1}\!\!\left[ \begin{matrix} 
 q^{\frac{1}{2}}, q^{\frac{1}{2}} \vspace{1mm}\\ 
 0 \end{matrix} \ \Bigg| \ q; \, q 
 \right] = \sum_{n=0}^{\infty} q^{n} 
 \frac{ \big( q^{\frac{1}{2}}; q \big)_{n}^{2} 
 \big( q^{\frac{3}{2}}; q \big)_{n}^{4} 
 (q^2;q)_{3n} }{ (q^2;q)_{2n}^{3} (q;q)_{n}^{3} } \\ 
 \times \left( 1 + \frac{ (1-q^n) (2-2 q^{\frac{1}{2} + 3n} +q^n - q^{\frac{1}{2} + 2n} ) 
 (1 + q^{\frac{1}{2} +n} )^2 }{ 
 \big( 1 - q^{\frac{1}{2} +n} \big) 
 (1 - q^{1+3n}) (1 + q^n +q^{2n}) } \right). 
\end{multline*}
 The above listing of previously known $q$-analogues of
 Ramanujan-type series for $\frac{1}{\pi}$
 and for Guillera's series for $\frac{1}{\pi^2}$ is complete, to the best of our knowledge. 

\subsection{{\tt EKHAD}-normalization}\label{EKHADoutline}
 Suppose we have a WZ pair $(F, G)$ that can be applied in some specified way (for example, via the relation in \eqref{mainGH}) 
 to obtain a closed-form evaluation. 
 Moreover, suppose that it is possible to express 
 $F(n, k)$ in the form 
 $$ F(n, k) 
 = S(n) \frac{ \left( \alpha_{1}(n) \right)_{\beta_{1}(k)} 
 \left( \alpha_{2}(n) \right)_{\beta_{2}(k)} \cdots 
 \left( \alpha_{\ell_1}(n) \right)_{\beta_{\ell_1}(k)} }{ 
 \left( \gamma_{1}(n) \right)_{\delta_{1}(k)} 
 \left( \gamma_{2}(n) \right)_{\delta_{2}(k)} \cdots 
 \left( \gamma_{\ell_2}(n) \right)_{\delta_{\ell_2}(k)} } $$ 
 for a single-variable hypergeometric expression $S(n)$
 and for degree-$1$ (single-variable)
 polynomials $\alpha_{1}(n)$, $\alpha_{2}(n)$, $\ldots$, $\alpha_{\ell_1}(n)$, 
 $\beta_{1}(k)$, $\beta_{2}(k)$, $\ldots$, $\beta_{\ell_1}(k)$, 
 $\gamma_{1}(n)$, $\gamma_{2}(n)$, $\ldots$, $\gamma_{\ell_{2}}(n)$, 
 $\delta_{1}(k)$, $\delta_{2}(k)$, $\ldots$, $\delta_{\ell_{2}}(k)$ 
 with rational coefficients. As a $q$-analogue of 
 $\frac{F(n, k)}{S(n)}$, we set 
\begin{equation}\label{setFnkq}
 \mathscr{F}(n, k, q) = 
 q^{p(k)} \frac{ \left( q^{\alpha_{1}(n)}; q \right)_{\beta_{1}(k)} 
 \left( q^{\alpha_{2}(n)}; q \right)_{\beta_{2}(k)} \cdots 
 \left( q^{\alpha_{\ell_1}(n)}; q \right)_{\beta_{\ell_1}(k)} }{ 
 \left( q^{\gamma_1(n)}; q \right)_{\delta_1(k)} 
 \left( q^{\gamma_2(n)}; q \right)_{\delta_2(k)} 
 \cdots \left( q^{\gamma_{\ell_2}(n)}; q \right)_{\delta_{\ell_2}(k)}}
\end{equation}
 for a fixed power $p(k)$ that we typically set as $k$, 
 and we write $\mathscr{F}(n, k) = \mathscr{F}(n, k, q)$. 
 By applying the $q$-version of Zeilberger's algorithm (available via the {\tt EKHAD} Maple package) to 
 \eqref{setFnkq}, suppose that this produces a first-order recurrence of the form 
 $$ p_{1}(n, q) \mathscr{F}(n+1, k) + p_{2}(n, q) \mathscr{F}(n, k) 
 = \mathscr{G}(n,k+1) - \mathscr{G}(n, k) $$
 for $q$-polynomials $p_{1}(n, q)$ and $p_2(n, q)$
 and where $\mathscr{G}(n, k)$ 
 is such that $\frac{\mathscr{G}(n, k)}{ \mathscr{F}(n, k) }$
 is a $q$-rational function. We then ``normalize'' the original input function in 
 \eqref{setFnkq} by letting 
\begin{equation}\label{lettingoverlinescrF}
 \overline{\mathscr{F}}(n, k, q) = (-1)^{n} 
 \left( \prod_{i=1}^{n-1} \frac{ p_{1}(i, q) }{ p_{2}(i, q) } \right) 
 \mathscr{F}(n, k). 
\end{equation}
 An equivalent normalization operation for ordinary hypergeometric series (as opposed to $q$-series) was outlined in our recent 
 work~\cite{Campbell2025}, but this operation does not seem to have been applied previously in relation to $q$-series. 
 By applying the $q$-Zeilberger algorithm to 
 \eqref{lettingoverlinescrF}, we find that the application of the normalization operator
 to $\mathscr{F}(n, k)$ has the effect of producing the $q$-WZ difference equation
 $$ \overline{\mathscr{F}}(n+1, k) - \overline{\mathscr{F}}(n, k) 
 = \overline{\mathscr{G}}(n, k+1) - \overline{\mathscr{G}}(n, k), $$ 
 for the companion $\overline{\mathscr{G}}$ to 
 $\overline{\mathscr{F}}$ obtained via the application of the $q$-Zeilberger algorithm
 to $\overline{\mathscr{F}}$, 
 with $\frac{ \overline{\mathscr{G}}(n, k) }{ \overline{\mathscr{F}}(n, k) }$
 as a $q$-rational function. 
 Informally, by then mimicking how the WZ pair $(F, G)$ was applied 
 to derive a closed-form expression, 
 but with the use of the $q$-WZ pair $( \overline{\mathscr{F}}, \overline{\mathscr{G}} )$
 used in place of $(F, G)$, 
 this can typically be used to produce a $q$-analogue of the same closed-form formula. 
 This is clarified through our applications of this method. 

 \subsection{Comparison}\label{sectionCompare}
 As indicated above, to show the originality of our {\tt EKHAD}-normalization meth-od, we provide an overview as to the methods that 
 were applied in the derivations of the previously known $q$-analogues surveyed in Section~\ref{sectionsurvey}, together with a 
 summary of the method of WZ seeds recently introduced by Au~\cite{Au2025}. 

 The Guo--Liu $q$-series in \eqref{firstGuoLiuq}
 is such that the partial sums of this series may be rewritten so that 
\begin{multline*}
 \sum_{n=0}^{m-1} q^{n^2} [6n+1]
 \frac{ (q;q^2)_{n}^{2} (q^2;q^4)_{n} }{ (q^4;q^4)_{n}^{3} } \\ 
 = \sum_{n=1}^{m} \frac{q^{(m-n)^2} (q^2;q^4)_{m} 
 (q;q^2)_{m-n} (q;q^2)_{m+n-1} }{ (1-q) (q^4;q^4)_{m-1}^{2} (q^4;q^4)_{m-n} 
 (q^2;q^4)_{n}}, 
\end{multline*}
 and, by making the substitution $n \mapsto m - n$ on the right-hand side and then letting $m \to \infty$, and by then making the 
 substitution $q \mapsto -q$ and applying a classical $q$-series identity referred to as Slater's identity~\cite{Slater1952}, this gives us a 
 derivation of \eqref{firstGuoLiuq}, and similarly for the Guo--Liu $q$-series in \eqref{GuoLiusecond}. The truncated series identity given 
 above is proved via a $q$-WZ pair, 
 but the normalization process described in 
 Section~\ref{EKHADoutline} is 
 not involved in the work of Guo and Liu, 
 and our applications of {\tt EKHAD}-normalization do not rely 
 on truncations of the summands of our $q$-series analogues 
 or classical $q$-series identities such as Slater's identity. 

 The Guo--Liu approach outlined above was subsequently developed by Guo and Zudilin~\cite{GuoZudilin2018}, in the following manner. 
 The first Guo--Liu $q$-series shown above was derived using a $q$-WZ pair from the work of Guo~\cite{Guo2018J2} related to the 
 supercongruences of Van Hamme. This $q$-WZ pair is a $q$-analogue of a WZ pair due to Zudilin related to a 
 supercongruence, but the 
 $q$-Zeilberger algorithm and 
 the normalization process in Section~\ref{EKHADoutline} were not involved. More explicitly, by taking the 
 $q$-WZ pair $(F, G)$ employed to prove the truncated $q$-series identity given above, and by then writing $\tilde{F}(n, k) = F(n, -k)$ 
 and $\tilde{G}(n, k) = G(n, -k)$, we obtain the WZ difference equation $$ \tilde{F}(n, k+1) - \tilde{F}(n, k) = \tilde{G}(n+1, k) - \tilde{G}(n, 
 k), $$ and this was used to derive the relation 
 $$ \sum_{n=0}^{\infty} \tilde{F}(n, k) = \sum_{n=0}^{\infty} \tilde{F}(n, 0)
 = \frac{ (1 + q) (q^2;q^4)_{\infty} (q^6;q^4)_{\infty} }{ (q^4;q^4)_{\infty}^{2} }, $$ 
 which can be shown to produce the Guo--Zudilin $q$-analogue of 
 the Ramanujan formula in \eqref{Ramanujannegquart}.
 In contrast, the Guo--Zudilin $q$-analogue of 
 Ramanujan's formula \eqref{Ramanujan1over9} 
 was derived as a limiting case of a cubic transformation given in Gasper and Rahman's monograph
 on basic hypergeometric series~\cite[Equation (3.8.18)]{GasperRahman2004}. 

 Using the $q$-WZ pair $(F, G) = (F_1, G_1)$ 
 associated with Guo and Liu's proof of \eqref{GuoLiusecond}, 
 Guillera~\cite{Guillera2018} then set 
\begin{equation}\label{setF2nk}
 F_{2}(n, k) = F_{1}(2n, k - n) 
\end{equation}
 and 
\begin{equation}\label{20250901148AQMA1}
 G_{2}(n, k) = G_{1}(2n, k-n) + G_{1}(2n+1, k-n) -F_{1}(2n+2,k-n-1) 
\end{equation}
 and then obtained that 
 $$ \sum_{n=0}^{\infty} G_{2}(n, k) = \sum_{n=0}^{\infty} G_{1}(n, k) 
 = \frac{ (q^{3}; q^{4})_{\infty} (q^5; q^4)_{\infty} }{ (q^4;q^4)_{\infty}^{2} } $$
 for $k = 0, 1, \ldots$, with the $k = 0$ case yielding
 a $q$-analogue of the Ramanujan-type $\pi$-formula
 on display in \eqref{Ramanujantypeneg512}. 
 A closely related approach was subsequently applied by Guo in 2020~\cite{Guo2020} 
 to obtain a $q$-analogue of the Ramanujan formula in \eqref{eqRamanujanposquart}, 
 using a $q$-WZ pair involving 
 $$ F(n, k) = (-1)^{n+k} q^{(n-k)^{2}} [4n+1] 
 \frac{ (q;q^2)_{n}^{2} (q;q^2)_{n+k} }{ (q^2;q^2)_{n}^{2} 
 (q^2;q^2)_{n-k} (q;q^2)_{k}^2 } $$
 obtained by Guo in 2018~\cite{Guo2018central}. It was not specified
 how this $q$-WZ pair was obtained, 
 but it can be shown that {\tt EKHAD}-normalization 
 does not produce the same $q$-WZ pair. 
 Starting with a WZ pair $(F, G)$ such that 
\begin{equation}\label{20250831819PM1}
 F(n, k) = 18 (-1)^{k} \frac{ \left( \frac{1}{2} \right)_{n+k}^{2} 
 \left( \frac{1}{2} \right)_{2n - k - 1} \left( \frac{1}{2} \right)_{k} }{ 
 (1)_{n-1}^{2} (1)_{2n + 2k} } \frac{3^k}{9^n} 
\end{equation}
 given in Guillera's thesis~\cite[p.\ 28]{Guillera2007}, Guillera, through a ``trial-and-error''
 approach, obtained a $q$-WZ pair including 
\begin{equation}\label{trialanderror}
 F_{1}(n, k) 
 = \frac{(-1)^{k}}{1-q} q^{2n^2 + 4 n k - k^2} \frac{ (q;q^2)_{n+k}^2 (q;q^2)_{2n-k-1} }{ 
 (q^6;q^6)_{n-1}^{2} (q^2;q^2)_{2n+2k} } (q^3;q^6)_{k}, 
\end{equation} 
 and this was applied to produce Guillera's $q$-analogue of Ramanujan's series
 of convergence rate 	 $-\frac{1}{48}$. 
 The $q$-Zeilberger algorithm
 was not used to obtain \eqref{trialanderror} from 
 \eqref{20250831819PM1}, and the above formulation of {\tt EKHAD}-normalization is not applicable
 to \eqref{20250831819PM1}, 
 and transformations of WZ pairs as in \eqref{setF2nk}
 and \eqref{20250901148AQMA1} are not involved in our work. 

 The techniques given by Chu~\cite{Chu2018} rely on recursions 
 for Bailey's bilateral well poised series, which is defined so that 
 $$ \Omega(a; b, c, d, e) := \sum_{k=0}^{\infty} \frac{1-q^{2k}a}{1-a} 
 \left[ \begin{matrix} b, c, d, e \vspace{1mm} \\ 
 qa/b, qa/c, qa/d, qa/e \end{matrix} \, \Bigg| \, q \right]_{k} 
 \left( \frac{qa^2}{bcde} \right)^{k}, $$
 	 in conjunction with the $q$-Dougall formula 
\begin{equation*}
 \Omega(a; b, c, d, a) = 
 \left[ \begin{matrix} qa, qa/bc, qa/bd, qa/cd \vspace{1mm} \\ 
 qa/b, qa/c, qa/d, qa/bcd \end{matrix} \, \Bigg| \, q \right]_{\infty}
\end{equation*}
 for $|qa/bcd| < 1$, and a similar approach was employed in the 
 subsequent work of Chen and Chu~\cite{ChenChu2021IJNT,ChenChu2021RJ}, 
 and, in all three cases, the $q$-Zeilberger's algorithm is not involved and $q$-WZ pairs are not involved. 

\subsection{WZ seeds}
 A notable and recent development 
 in the application of WZ theory 
 is given by the method of WZ seeds due to Au~\cite{Au2025}. 
 Although this method was not applied in relation to $q$-series, 
 we briefly review this method, to again emphasize the originality of our normalization method. 

 The identity that is central to Au's method of WZ seeds 
 \cite{Au2025} may be summarized as follows. 
 For a WZ pair $(F, G)$, suppose that 
 both $\sum_{k=0}^{\infty} F(0, k)$ and $\sum_{n=0}^{\infty} G(n, 0)$ converge, 
 and suppose that 
 $\lim_{k \to \infty} G(n, k) = g(n)$ exists for each $n \in \mathbb{N}$
 and that $\sum_{n \geq 0} g(n)$ converges.
 Then $\lim_{n \to \infty} \sum_{k=0}^{\infty} F(n, k)$ exists and is finite, and, moreover, the relation 
\begin{equation}\label{displayWZseeds}
 \sum_{k=0}^{\infty} F(0, k) + \sum_{n=0}^{\infty} g(n) = \sum_{n=0}^{\infty} G(n, 0) 
 + \lim_{n \to \infty} \sum_{k = 0}^{\infty} F(n, k) 
\end{equation}
 holds. By then extracting coefficients from both sides of 
 \eqref{displayWZseeds} with respect to a given parameter, 
 this can be used, as demonstrated by Au, to produce fast converging $\pi$ formulas
 in the spirit of Ramanuajn and Guillera. 
 Zeilberger's algorithm is not involved in the method of WZ seeds 
 and our normalization procedure is not involved in Au's method, and we leave it to a separate project
 to produce $q$-series identities using WZ seeds. 

\section{New $q$-analogues of Ramanujan's formulas}\label{sectionqRamanujan}
 Out of the series expansions for $\frac{1}{\pi}$ discovered by Ramanujan, the first WZ proof for any such expansion was due to 
 Zeilberger~\cite{EkhadZeilberger1994}. In 2002, Guillera~\cite{Guillera2002} introduced WZ proofs of Ramanujan's formulas 
 \eqref{Ramanujannegquart} and \eqref{Ramanujan1over64}. 
 The $F$-entry in the WZ pair $(F, G)$ Guillera applied to prove these Ramanujan formulas is 
\begin{equation}\label{GuilleraFRamanujan}
 F(n, k) = 64 \frac{ (-1)^{n} (-1)^k }{ 2^{10n} 2^{2k} } 
 \frac{n^2}{4n-2k-1} \frac{ \binom{2k}{k}^2 \binom{2n}{n}^2 \binom{4n-2k}{2n-k} }{ \binom{2n}{k} \binom{n+k}{n} }. 
\end{equation}
 This was obtained by Guillera through the {\tt EKHAD} package. By rewriting \eqref{GuilleraFRamanujan} as 
\begin{equation}\label{rewriteGFR}
 F(n, k) = -64 \left( -\frac{1}4{} \right)^{n} \left[ \begin{matrix} 
 -\frac{1}{4}, \frac{1}{4}, \frac{1}{2} \vspace{1mm} \\ 
 1, 1, 1 \end{matrix} \right]_{n} 
 n^2 \left[ \begin{matrix} 
 \frac{1}{2}, \frac{1}{2} \vspace{1mm} \\ 
 \frac{3}{2} - 2 n, 1 + n \end{matrix} \right]_{k}, 
\end{equation}
 this has led us toward a way of extending Guillera's 
 WZ pair $(F, G)$, through the use of the {\tt EKHAD}-normalization procedure described above. 
 Experimentally, we have discovered that the $q$-analogue 
\begin{equation}\label{qofPochammerk}
 q^{k} \left[ \begin{matrix} q^{\frac{1}{2}}, q^{\frac{1}{2}} \vspace{1mm} \\ 
 q^{\frac{3}{2} - 2 n}, q^{1+n} \end{matrix} \, \Bigg| \, q \right]_{k} 
\end{equation}
 of the combination of Pochhammer symbols in \eqref{rewriteGFR}
 indexed by $k$ 
 satisfies a first-order recurrence via the $q$-version of Zeilberger's algorithm. 
 The {\tt EKHAD}-normalization of \eqref{qofPochammerk} 
 has led us to construct a $q$-WZ pair 
 to prove the following new result, 
 which (as we later clarify) 
 provides a $q$-analogue of Ramanujan's formula \eqref{Ramanujannegquart}. 

\begin{theorem}\label{qnegquarttheorem}
 For the $q$-polynomial 
 $$\rho(n, q) = -2 q^{2 n+1}+q^{4 n+1}-q^{4 n+2}+q^{6 n+1}+q^{8 n+2}+q^{10 n+3}-1,$$ 
\noindent the equality 
\begin{multline*}
 \sum_{n=0}^{\infty} (-1)^n q^{3(n-1) (n+1)} 
 \left[ \begin{matrix} q \vspace{1mm} \\ 
 -q^2, -q^2, q^2, q^2, q^2, -q^3, -q^3 \end{matrix} \, \Bigg| \, q^2 \right]_{n} \\ 
 \times \big[ \begin{matrix} q, q^3 \vspace{1mm} 
 \end{matrix} \, \big| \, q^4 \big]_{n} \, \rho(n, q) = 
 \frac{(q-1) (q+1)^3}{q^3} 
 \left[ \begin{matrix} q, q \vspace{1mm} \\ 
 q^2, q^3 \end{matrix} \, \Bigg| \, q^2 \right]_{\frac{1}{2}} 
\end{multline*}
 holds for $q$ such that $0< |q| < 1$. 
\end{theorem}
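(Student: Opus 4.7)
The plan is to follow the \texttt{EKHAD}-normalization strategy of Section~\ref{EKHADoutline}, starting from Guillera's WZ pair as rewritten in \eqref{rewriteGFR}. First, I would take the $k$-indexed factor in \eqref{rewriteGFR} and form its $q$-analogue $\mathscr{F}(n,k,q)$ as in \eqref{qofPochammerk}. Running the $q$-version of Zeilberger's algorithm on $\mathscr{F}(n,k)$ produces a first-order telescoping recurrence
$$p_{1}(n,q)\,\mathscr{F}(n+1,k)+p_{2}(n,q)\,\mathscr{F}(n,k)=\mathscr{G}(n,k+1)-\mathscr{G}(n,k),$$
with explicit $q$-polynomial coefficients $p_{1}$ and $p_{2}$. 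Multiplying by the normalizing factor $(-1)^{n}\prod_{i=1}^{n-1}p_{1}(i,q)/p_{2}(i,q)$ yields $\overline{\mathscr{F}}(n,k,q)$ as in \eqref{lettingoverlinescrF}. This product telescopes into a genuine $q$-hypergeometric term in $n$, and I expect it to collapse into precisely the Gaussian weight $q^{3(n-1)(n+1)}$ together with the $q$-Pochhammer symbols $[\,\cdot\,|q^{2}]_{n}$ and $[q,q^{3}|q^{4}]_{n}$ that sit under the summand on the left-hand side of Theorem~\ref{qnegquarttheorem}.

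Second, a fresh application of $q$-Zeilberger to $\overline{\mathscr{F}}$ should certify a companion $\overline{\mathscr{G}}(n,k)$ for which $(\overline{\mathscr{F}},\overline{\mathscr{G}})$ is a bona fide $q$-WZ pair, namely
$$\overline{\mathscr{F}}(n+1,k)-\overline{\mathscr{F}}(n,k)=\overline{\mathscr{G}}(n,k+1)-\overline{\mathscr{G}}(n,k),$$
with $\overline{\mathscr{G}}/\overline{\mathscr{F}}$ a $q$-rational function whose specialization at $k=0$ is precisely the polynomial $\rho(n,q)$; in effect this specialization is the operational definition of $\rho$. Applying the $q$-analogue of \eqref{mainGH} to $(\overline{\mathscr{F}},\overline{\mathscr{G}})$ then gives
$$\sum_{n=0}^{\infty}\overline{\mathscr{G}}(n,0)=\sum_{n=0}^{\infty}\overline{\mathscr{H}}(n,0),\qquad \overline{\mathscr{H}}(n,k):=\overline{\mathscr{F}}(n+1,n+k)+\overline{\mathscr{G}}(n,n+k).$$
The left-hand sum is exactly the series in the statement. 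The right-hand sum is expected to collapse, because of the large Gaussian exponent introduced by normalization, to a single closed-form contribution coming from $\overline{\mathscr{H}}(0,0)$; rewriting that term via $(a;q)_{s}=(a;q)_{\infty}/(aq^{s};q)_{\infty}$ should match the fractional-index symbol $[q,q/q^{2},q^{3}|q^{2}]_{1/2}$ displayed on the right-hand side of the theorem.

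The principal obstacle is verifying the vanishing hypothesis $\lim_{n\to\infty}\sum_{k=0}^{n-1}\overline{\mathscr{F}}(n,k)=0$ required by the $q$-analogue of \eqref{mainGH}. The factor $q^{3(n-1)(n+1)}$ decays super-geometrically for $0<|q|<1$, but the denominator $(q^{1-2n};q^{2})_{k}$ can temporarily become large in modulus, so the estimate requires uniform control on the $q$-Pochhammer ratio over $0\le k<n$; the expected bound is of order $n\,|q|^{3n^{2}-3}$, which tends to zero. A secondary sanity check, paralleling the argument in Section~\ref{subsectionmotivating}, is to divide both sides of the claimed identity by $(1-q)^{2}$ and pass to the limit $q\to 1^{-}$ using \eqref{limitqPochhammer} and \eqref{limitqGammaGamma}; this should recover Ramanujan's series \eqref{Ramanujannegquart} of convergence rate $-\tfrac{1}{4}$, thereby justifying the label ``$q$-analogue.'' The remaining tasks, namely aligning the telescoped product $\prod p_{1}/p_{2}$ with the displayed $q$-Pochhammer structure and extracting $\rho(n,q)$ from $\overline{\mathscr{G}}/\overline{\mathscr{F}}$ at $k=0$, are routine once the two $q$-Zeilberger certificates are in hand.
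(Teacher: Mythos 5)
Your setup (building the $q$-WZ pair $(\overline{\mathscr{F}},\overline{\mathscr{G}})$ by {\tt EKHAD}-normalization of \eqref{qofPochammerk}) matches the paper, but the mechanism you propose for actually evaluating the series does not work, and it is not the one the paper uses. You invoke the $q$-analogue of \eqref{mainGH} and assert that $\sum_{n\ge 0}\overline{\mathscr{H}}(n,0)$ ``collapses, because of the large Gaussian exponent introduced by normalization, to a single closed-form contribution coming from $\overline{\mathscr{H}}(0,0)$.'' That is false: the Gaussian weight $q^{3(n-1)(n+1)}$ makes the terms decay rapidly, but every term is nonzero, so the $H$-sum is a genuine infinite series. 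Indeed, for this very WZ pair the paper uses $\sum_n H(n,0;q)$ as the \emph{left-hand side of Theorem~\ref{q64theorem}} (the $q$-analogue of the rate-$\frac{1}{64}$ Ramanujan series); it is a companion series with the same value, not a closed form. Your route therefore replaces one unevaluated series by another and never reaches the right-hand side.

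The missing idea is Carlson's theorem. The paper perturbs the discrete variable ($\eta=n+p$), telescopes the WZ relation in $n$ to show that $\lim_{p\to 0}\sum_{n\ge 0}G(n+p,k;q)$ is independent of the integer $k$, and then applies Carlson's theorem \cite{Bailey1935} to conclude that this sum is constant in $k$ as an analytic function, so it may be evaluated at the \emph{non-integer} point $k=\tfrac12$. At $k=\tfrac12$ the factor $\big(q^{\frac32-2n};q\big)_{1/2}$ in the denominator of $F$ blows up for every integer $n\ge 1$, so all terms with $n\ge 1$ vanish and the sum genuinely collapses to the single term $\lim_{p\to 0}G\big(p,\tfrac12;q\big)$ --- this is where the fractional index $\tfrac12$ on the right-hand side of the theorem comes from, not from rewriting an integer-index term via $(a;q)_s=(a;q)_\infty/(aq^s;q)_\infty$. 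The remaining analytic work in the paper (which you do not address) is justifying the interchange of $\lim_{p\to 0}$ with the infinite sum, done via the monotone convergence theorem applied to real and imaginary parts along a sequence $p=1/(\ell+\tfrac12)$. Without Carlson's theorem and the $k=\tfrac12$ specialization, your argument has no way to produce the closed form, so the proposal has a genuine gap at its central step.
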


\begin{proof}
 We let 
\begin{multline*}
 F(n, k; q) := 4 (-1)^n q^{\frac{3 n^2}{2} + k -1} 
 \frac{ \big(1 + q^{\frac{1}{2}} \big)^3 \left(1-q^n\right)^2}{ \big(1 - q^{\frac{1}{2}} \big)^2} 
 \left[ \begin{matrix} q^{-\frac{1}{2}}, q^{\frac{1}{2}} 
 \end{matrix} \, \big| \, q^2 \right]_{n} \\ 
 \times \left[ \begin{matrix} q^{\frac{1}{2}} \vspace{1mm} \\ 
 -1, -1, -q^{\frac{1}{2}}, -q^{\frac{1}{2}}, q, q, q \end{matrix} \, \Bigg| \, q \right]_{n} 
 \left[ \begin{matrix} q^{\frac{1}{2}}, q^{\frac{1}{2}} \vspace{1mm} \\ 
 q^{\frac{3}{2}-2n}, q^{1+n} \end{matrix} \, \Bigg| \, q \right]_{k}. 
\end{multline*}
 We then let 
\begin{multline*}
 R(n, k; q) := \frac{q^{2 n}-q^{k+\frac{1}{2}}}{q^{k} \left(1-q^n\right)^2 \left(1 + q^n\right)^2} \\ 
 \times \frac{ q^{k+2 n+\frac{1}{2}}-2 q^{n+\frac{1}{2}}-q^{2 n+1}+q^{3
 n+\frac{1}{2}}+q^{4 n+1}+q^{5
 n+\frac{3}{2}}-1}{ 2 q^{n+1}+q^{2 n+\frac{3}{2}} + q^{\frac{1}{2}}}, 
\end{multline*}
 with $G(n, k; q) := R(n, k; q) F(n, k; q)$. 
 Let $n$ and $k$ be nonnegative real numbers, let $p$ be a real number such that $|p| < 1$, 
 and let $\eta = n + p$. 
 We may verify that the difference equation 
 $$ F(\eta + 1, k; q) - F(\eta, k; q) = G(\eta, k + 1; q) - G(\eta, k; q ) $$ 
 holds. A telescoping phenomenon 
 then gives us that 
 $$ F(m + p + 1, k; q) - F(p, k; q)
 = \sum_{n = 0}^{m} \big( G(n+p,k+1;q) - G(n+p,k;q) \big) $$ 
 for positive integers $m$. 
 We proceed to let $m$ approach $\infty$
 and to let $p$ approach $0$. 
 We then find that 
 	$\lim_{p \to 0} F(p, k; q)$ vanishes from the vanishing of 
 $F(0, k; q)$. Similarly, we find that $\lim_{m \to \infty} F(m + p, k; q)$ 
 vanishes, since, as $m$ approaches $\infty$, the expression $F(m, k; q)$ (setting $p = 0$)
 approaches 
\begin{multline*}
 \lim_{m \to \infty} \frac{ (-1)^{m+1} q^{\frac{3 (m+1)^2 }{2}} }{ \left( q^{\frac{3}{2} - 2(m+1)}; q \right)_{k} } 
 \frac{4 \left(\sqrt{q}+1\right)^3 q^{k-1}}{\left(1-\sqrt{q}\right)^2} 
 \left[ \begin{matrix} q^{\frac{1}{2}} \vspace{1mm} \\ 
 -1, -1, -q^{\frac{1}{2}}, -q^{\frac{1}{2}}, q, q, q \end{matrix} \, \Bigg| \, q \right]_{\infty} \\ 
 \times \left[ \begin{matrix} 
 q^{-\frac{1}{2}}, q^{\frac{1}{2}} \end{matrix} \, \Bigg| \, q^{2} \right]_{\infty} 
 \left[ \begin{matrix} q^{\frac{1}{2}}, q^{\frac{1}{2}} \end{matrix} \, \Bigg| \, q \right]_{k}, 
\end{multline*}
 and we thus obtain the vanishing of 
 $\lim_{m \to \infty} F(m, k; q)$ 
 from the vanishing of 
 $$ \lim_{m \to 
 \infty} \frac{ q^{\frac{3 (m + 
 1)^2 }{2}} }{ \left( q^{\frac{3}{2} - 2(m + 1)}; q \right)_{k} } = 0. $$
 From the above application of telescoping, together with 
 the vanishings we obtain 
 from the given limiting operations, we find that 
\begin{equation}\label{minfinityp0}
 \lim_{p \to 0} \sum_{n=0}^{\infty} 
 G(n+p,k;q) = \lim_{p \to 0} \sum_{n=0}^{\infty} 
 G(n+p,k+1; q). 
\end{equation}
 Carlson's theorem~\cite[\S5.3]{Bailey1935} can then be applied to show that both sides of \eqref{minfinityp0} are constant 
 for every value of $k$, including non-integers, so that 
\begin{equation}\label{consequenceCarlson1935}
 \lim_{p \to 0} \sum_{n=0}^{\infty} 
 G(n+p,0;q) = \lim_{p \to 0} \left( G\left( p, \frac{1}{2}; q \right) + \sum_{n = 1}^{\infty} 
 G\left(n+p,\frac{1}{2}; q \right) \right). 
\end{equation}

 To apply an interchange of limiting operations to
\begin{equation}\label{interchangelimsumG}
 \lim_{p \to 0} \sum_{n=1}^{\infty} G\left( n + p, \frac{1}{2}; q \right), 
\end{equation}
 we begin by rewriting the limit in \eqref{interchangelimsumG} as 
\begin{equation*}
 \lim_{\ell \to \infty} \sum_{n=1}^{\infty} \left( \Re G\left( n + \frac{1}{\ell+\frac{1}{2}}, \frac{1}{2}; q \right) 
 + i \, \Im G\left( n + \frac{1}{\ell+\frac{1}{2}}, \frac{1}{2}; q \right) \right), 
\end{equation*}
 noting that the value $\ell + \frac{1}{2}$ involved above
 is such that the above summand is defined for 
 all $\ell \in \mathbb{N}$ and for all $n \in \mathbb{N}$. 
 Depending on the value of $q$, we have that the real-valued sequence 
\begin{equation*}
 \left( \Re G\left( n + \frac{1}{\ell+\frac{1}{2}}, \frac{1}{2}; q \right) : \ell \in \mathbb{N} \right) 
\end{equation*}
 is either eventually monotonically increasing or decreasing. 
 We may assume without loss of generality that 
\begin{equation}\label{202509061014AMqqq2A}
 \left( \Re G\left( n + \frac{1}{\ell+\frac{1}{2}}, \frac{1}{2}; q \right) : \ell \in \mathbb{N}_{\geq m} 
 \right) 
\end{equation}
 is nonnegative and monotonically decreasing for some positive integer $m$, 
 as equivalent arguments can be applied in the remaining
 cases. For each $\ell \in \mathbb{N}_{\geq m}$, we set 
\begin{equation}\label{definefell}
 f_{\ell} := \Re G\left( n + \frac{1}{\ell+\frac{1}{2}}, \frac{1}{2}; q \right), 
\end{equation}
 and we let \eqref{definefell} 
 be understood as a measurable function $f_{\ell}\colon [1, \infty) \to [0, \infty)$
 with respect to the measure spaces $(1, 2, \ldots)$ and $(0, 1, \ldots)$, 
 for each positive integer $\ell$. Writing $f$ in place of 
 $ \Re G\left( n, \frac{1}{2}; q \right)$, Since 
 $f_{\ell} \to f$ almost everywhere, and since 
 $f_{m} \geq f_{m+1} \geq \cdots$, the monotone convergence theorem gives us that 
\begin{equation}\label{integralidentity}
 \lim_{\ell \to \infty} \int_{[1, \infty)} f_{\ell} = \int_{[1, \infty)} f, 
\end{equation}
 where the integrals in \eqref{integralidentity}
 are understood to be with respect to the measure space $(1, 2, \ldots)$, so that 
 $$ \lim_{\ell \to \infty} \sum_{n=1}^{\infty} \Re G\left( n + \frac{1}{\ell+\frac{1}{2}}, \frac{1}{2}; q \right) 
 = \sum_{n=1}^{\infty} \lim_{\ell \to \infty} \Re G\left( n + \frac{1}{\ell+\frac{1}{2}}, \frac{1}{2}; q \right), $$
 and a symmetric argument applies in the remaining case for the imaginary part. 

 Since $\lim_{p \to 0} G\big(n+p, \frac{1}{2}; q \big)$ vanishes for positive integers $n$, 
 the above application of the monotone convergence theorem together with 
 \eqref{consequenceCarlson1935} gives us that 
\begin{equation}\label{negquartlastdisplay}
 \lim_{p \to 0} \sum_{n=0}^{\infty} G(n + p, 0; q) = \lim_{p \to 0} G\left( p, \frac{1}{2}; q \right). 
\end{equation}
 By an equivalent application monotone convergence theorem, relative to our proof of \eqref{negquartlastdisplay}, 
 we are permitted to again interchange the limiting operations on the left of \eqref{negquartlastdisplay}. 
 After applying the substitution $q \mapsto q^2$, we obtain 
 an equivalent version of the desired result. 
\end{proof}

 Dividing both sides of the equality in Theorem~\ref{qnegquarttheorem} by $q - 1$ and by then letting $q$ approach $1$, the 
 summand, for a nonnegative integer $n$, of the left-hand side reduces to the summand of the Ramanujan series in 
 \eqref{Ramanujannegquart} for the same argument $n$, and the right-hand side reduces to $\frac{8}{\pi}$ according to the known 
 property of the $q$-Gamma function whereby 
\begin{equation}\label{closedqGamma}
 \lim_{q \to 1} \Gamma_{q^2}\left( \frac{1}{2} \right) = \sqrt{\pi}.
\end{equation}

 As clarified below, Theorem~\ref{q64theorem} provides a $q$-analogue of Ramanujan's formula \eqref{Ramanujan1over64}. 

\begin{theorem}\label{q64theorem}
 For the $q$-polynomials 
 $$ \rho_1(n, q) = 
 \big(1-q^{2 n-1}\big) \big(3 q^{10 n+3} +3 q^{8 n+2} -q^{6 n+3} +2 q^{6 n+1} -3 q^{4 n+2} -3 q^{2 n+1} -1\big) $$
 and $\rho_{2}(n, q) = \left(1 + q^{2 n+1}\right)^2$, the equality 
\begin{multline*}
 \sum_{n=0}^{\infty} (-1)^n q^{ (n-1) (3 n+5) } 
 \left[ \begin{matrix} 
 q, q, q, q, q^3 
 \vspace{1mm} \\ 
 -q^2, -q^2, q^2, q^2, q^2, q^{3-4n}, q^{2n+2} \end{matrix} \, \Bigg| \, q^2 \right]_{n} \\ 
 \times \left[ \begin{matrix} 
 q^{-1}, q 
 \vspace{1mm} \\ 
 q^2, q^6 \end{matrix} \, \Bigg| \, q^4 \right]_{n} \frac{\rho_1(n, q)}{\rho_2(n, q)} 
 = \frac{(q-1)^2 (q+1)^2}{q^6} 
 \left[ \begin{matrix} q, q \vspace{1mm} \\ 
 q^2, q^3 \end{matrix} \, \Bigg| \, q^2 \right]_{\frac{1}{2}} 
\end{multline*}
 holds for $q$ such that $0 < |q| < 1$. 
\end{theorem}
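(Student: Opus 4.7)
The plan is to follow the same scheme as in the proof of Theorem~\ref{qnegquarttheorem}, with a new $q$-WZ pair $(F(n,k;q), G(n,k;q))$ tailored to the $\frac{1}{64}$ series of \eqref{Ramanujan1over64}. Guillera's $F$-entry for this Ramanujan formula, given by \eqref{GuilleraFRamanujan}, can be rewritten in the form of \eqref{rewriteGFR} as a product of a single-variable hypergeometric $n$-factor and a $k$-indexed Pochhammer ratio of the same shape as the one appearing in \eqref{qofPochammerk}. Forming the $q$-analogue of that $k$-indexed ratio via \eqref{setFnkq}, running the $q$-version of Zeilberger's algorithm to produce the first-order recurrence in $n$ with $q$-polynomial coefficients $p_{1}(n,q)$ and $p_{2}(n,q)$, and then applying the normalization \eqref{lettingoverlinescrF} yields an explicit $\overline{\mathscr{F}}(n,k,q)$ together with its $q$-Zeilberger companion $\overline{\mathscr{G}}(n,k,q) = R(n,k;q)\,\overline{\mathscr{F}}(n,k,q)$ satisfying the $q$-WZ difference equation. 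Relabeling $(F,G) := (\overline{\mathscr{F}}, \overline{\mathscr{G}})$, the mechanical construction furnishes the Gaussian weight $q^{(n-1)(3n+5)}$ and the rational correction $\rho_{1}(n,q)/\rho_{2}(n,q)$ appearing on the left of the theorem, and one verifies by direct computation that
$$ F(\eta+1,k;q) - F(\eta,k;q) = G(\eta,k+1;q) - G(\eta,k;q)$$
holds not only for integer $\eta=n$ but for real $\eta = n+p$ as well.

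With the $q$-WZ pair in hand, I would set $\eta=n+p$ for a small real parameter $p$ with $|p|<1$ and telescope over $n=0,1,\ldots,m$, obtaining
$$ F(m+p+1, k; q) - F(p, k; q) = \sum_{n=0}^{m}\bigl(G(n+p,k+1;q) - G(n+p,k;q)\bigr).$$
As in the proof of Theorem~\ref{qnegquarttheorem}, the boundary term $\lim_{p\to 0}F(p,k;q)$ vanishes: the normalization factor $\prod_{i=1}^{n-1}p_{1}(i,q)/p_{2}(i,q)$ combined with the $(1-q^{n})$-type vanishings in $\mathscr{F}$ forces $F(0,k;q)=0$. The boundary term $\lim_{m\to\infty}F(m+p,k;q)$ vanishes because the Gaussian factor $q^{(m-1)(3m+5)}$ decays faster than the growth, in modulus, of the shifted denominators $(q^{3-4n};q^{2})_{n}$ and $(q^{2n+2};q^{2})_{n}$. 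Hence for every $k\in\mathbb{N}$,
$$ \lim_{p\to 0}\sum_{n=0}^{\infty} G(n+p,k;q) = \lim_{p\to 0}\sum_{n=0}^{\infty} G(n+p,k+1;q),$$
so the sum is constant in integer $k$. Carlson's theorem extends this constancy to the half-integer value $k=\tfrac{1}{2}$, and a monotone-convergence argument mirroring the one in the proof of Theorem~\ref{qnegquarttheorem} permits the interchange of the limit in $p$ with the sum over $n$ at $k=\tfrac{1}{2}$. The terms with $n\geq 1$ drop out because $\lim_{p\to 0}G(n+p,\tfrac{1}{2};q)=0$ for positive integers $n$, leaving only the residue $\lim_{p\to 0}G(p,\tfrac{1}{2};q)$, whose explicit evaluation reproduces the right-hand side of the theorem after the substitution $q\mapsto q^{2}$.

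The principal obstacle is the search for, and the certification of, the explicit $q$-WZ pair. The certificate $R(n,k;q)$ for the $\frac{1}{64}$ series is visibly more elaborate than the one used in Theorem~\ref{qnegquarttheorem}, as is already evident from the fact that $\rho_{1}/\rho_{2}$ is genuinely $q$-rational rather than $q$-polynomial. Verifying $F(\eta+1,k;q)-F(\eta,k;q)=G(\eta,k+1;q)-G(\eta,k;q)$ as a formal identity in $\eta$, $k$, and $q$ requires careful bookkeeping of many $q$-shifted Pochhammer factors and is best delegated to a computer algebra system. A secondary subtlety is justifying the eventual monotonicity of $\Re G\bigl(n+\tfrac{1}{\ell+1/2},\tfrac{1}{2};q\bigr)$ in $\ell$ for the new $G$, though this can be handled by the same one-variable analysis used in the proof of Theorem~\ref{qnegquarttheorem}.
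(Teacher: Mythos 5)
There is a genuine gap: the route you describe reproduces Theorem~\ref{qnegquarttheorem}, not Theorem~\ref{q64theorem}. The construction you outline --- rewriting \eqref{GuilleraFRamanujan} as \eqref{rewriteGFR}, $q$-deforming the $k$-indexed Pochhammer ratio as in \eqref{qofPochammerk}, applying {\tt EKHAD}-normalization, and then evaluating $\lim_{p \to 0} \sum_{n=0}^{\infty} G(n+p,0;q)$ as $\lim_{p \to 0} G\big(p, \tfrac{1}{2}; q\big)$ via Carlson's theorem and monotone convergence --- is verbatim the derivation of the $q$-analogue of \eqref{Ramanujannegquart}. Classically, Guillera's single WZ pair built from \eqref{GuilleraFRamanujan} yields \emph{both} \eqref{Ramanujannegquart} and \eqref{Ramanujan1over64}, but the two series arise from different sums attached to the same pair: $\sum_{n} G(n,0)$ gives the convergence-rate-$(-\tfrac14)$ series, while the convergence-rate-$\tfrac{1}{64}$ series comes from $\sum_{n} H(n,0)$ with $H$ as in \eqref{definitionH}, via the Zeilberger relation \eqref{mainGH}. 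Your proposal never invokes the $H$-construction, so the series you would end up summing is the one already evaluated in Theorem~\ref{qnegquarttheorem}; there is no mechanism in your argument that produces the diagonal shift $k \mapsto n+k$ responsible for the $n$-indexed parameters $q^{3-4n}$ and $q^{2n+2}$ (and the Gaussian weight $q^{(n-1)(3n+5)}$) in the left-hand side of Theorem~\ref{q64theorem}.

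The paper's actual proof takes the \emph{same} $q$-WZ pair $(F,G)$ from the proof of Theorem~\ref{qnegquarttheorem}, defines $H(n,k;q) := F(n+1, n+k; q) + G(n, n+k; q)$ as in \eqref{Hwithqdefinition}, and uses the $q$-version of \eqref{mainGH} to get $\lim_{p\to 0} \sum_{n} G(n+p,0;q) = \lim_{p\to 0} \sum_{n} H(n+p,0;q)$; the left side has already been identified with $\lim_{p\to 0} G\big(p,\tfrac12;q\big)$ (the closed form), and the right side, after the interchange of limits and the substitution $q \mapsto q^2$, is the series of Theorem~\ref{q64theorem}. To repair your argument you would need to add exactly this step: no new $q$-WZ pair is required, but the sum you evaluate must be $\sum_{n} H(n+p,0;q)$ rather than $\sum_{n} G(n+p,0;q)$. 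The telescoping, Carlson, and monotone-convergence machinery you describe is otherwise the correct supporting apparatus.
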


\begin{proof}
 For the same WZ pair $(F, G)$ involved in our proof of Theorem~\ref{q64theorem}, by setting $H(n, k; q)$ as in \eqref{definitionH} so that 
\begin{equation}\label{Hwithqdefinition}
 H(n , k; q) := F(n + 1, n+ k; q) + G(n, n + k; q), 
\end{equation}
 an equivalent version of Zeilberger's identity in \eqref{mainGH} gives us that 
\begin{equation}\label{GHgives}
 \lim_{p \to 0} \sum_{n=0}^{\infty} G(n+p, 0; q) = \lim_{p \to 0} \sum_{n=0}^{\infty} 
 H(n+p,0; q). 
\end{equation}
 By again applying Carlson's theorem, we obtain from \eqref{GHgives} that 
\begin{equation}\label{againCarlsonGH}
 \lim_{p \to 0} G\left( p, \frac{1}{2}; q \right) 
 = \lim_{p \to 0} \sum_{n=0}^{\infty} H(n+p, 0; q). 
\end{equation}
 Applying the monotone convergence theorem to interchange the order of limiting operations on the right of 
 \eqref{againCarlsonGH}, and applying the substitution $q \mapsto q^2$, we obtain an equivalent version of the desired result. 
\end{proof}

 Multiplying by $ q (q+1)^2/(1-q)^2$
 both sides of the $q$-identity in Theorem~\ref{q64theorem}, 
 and then letting $q$ approach $1$, 
 the summand of the infinite series for a given natural number $n$
 reduces to the summand of Ramanujan's series 
 \eqref{Ramanujan1over64} for the same integer $n$, 
 and we may again apply the $q$-Gamma relation in \eqref{closedqGamma}
 to obtain the desired multiple of $\frac{1}{\pi}$. 

\section{$q$-analogues of Guillera's formulas}
 By rewriting Guillera's $F$-function in \eqref{introGuilleraF} so that 
\begin{equation*}
 F(n, k) = -512 \left( -\frac{1}{16} \right)^{n} \left[ \begin{matrix} 
 -\frac{1}{4}, \frac{1}{4}, \frac{1}{2}, \frac{1}{2}, \frac{1}{2} \vspace{1mm} \\ 
 1, 1, 1, 1, 1 \end{matrix} \right]_{n} 
 n^3 \left[ \begin{matrix} 
 \frac{1}{2}, \frac{1}{2}, \frac{1}{2} \vspace{1mm} \\ 
 \frac{3}{2} - 2 n, 1 + n, 1 + n \end{matrix} \right]_{k}, 
\end{equation*}
 this leads us to apply {\tt EKHAD}-normalization using a $q$-analogue of the factor 
\begin{equation}\label{firstlabelqGuillera}
 \left[ \begin{matrix} 
 \frac{1}{2}, \frac{1}{2}, \frac{1}{2} \vspace{1mm} \\ 
 \frac{3}{2} - 2 n, 1 + n, 1 + n \end{matrix} \right]_{k}, 
\end{equation}
 namely 
\begin{equation}\label{EKHADqGuillera}
 q^{k} \left[ \begin{matrix} q^{\frac{1}{2}}, q^{\frac{1}{2}}, q^{\frac{1}{2}} \vspace{1mm} \\ 
 q^{\frac{3}{2} - 2 n}, q^{1+n}, q^{1+n} \end{matrix} \, \Bigg| \, q \right]_{k}. 
\end{equation}
 Using the $q$-version of Zeilberger's algorithm, 
 we find that \eqref{EKHADqGuillera} satisfies a first-order recurrence, 
 and this has led us to construct 
 a $q$-WZ pair to prove the following $q$-analogue of the Guillera formula in 
 \eqref{Guillera16squared}. 

\begin{theorem}\label{qGuillerafirst}
 For the $q$-polynomial 

 \ 

\noindent $\rho(n, q) = 
 3 q^{2 n+1}-3 q^{4 n+1}+3 q^{4 n+2}-2 q^{6 n+1}-3 q^{6 n+2}+q^{6 n+3}+q^{8 n+1}-3 q^{8 n+2}-2 q^{8 n+3}+3 q^{10 n+2}-3 q^{10 n+3}+3 q^{12
 n+3}+q^{14 n+4}+1,    $ 

 \ 

\noindent the relation 
\begin{multline*}
 \sum_{n=0}^{\infty} q^{2 n^2} 
 \left[ \begin{matrix} q, q, q \vspace{1mm} \\ 
 -q^2, -q^2, -q^2, q^2, q^2, q^2, q^2, q^2, -q^3, -q^3, -q^3 
 \end{matrix} \, \Bigg| \, q^2 \right]_{n} \\ 
 \times \big[ \begin{matrix} q, q^3 
 \end{matrix} \, \big| \, q^4 \big]_{n} \rho(n, q) = 
 (q-1)^2 (q + 1)^4 
 \left[ \begin{matrix} q, q, q \vspace{1mm} \\ 
 q^2, q^2, q^3 \end{matrix} \, \Bigg| \, q^2 \right]_{\frac{1}{2}} 
\end{multline*}
 holds for $q$ such that $|q| < 1$. 
\end{theorem}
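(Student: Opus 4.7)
The plan is to mirror the template established in the proofs of Theorems~\ref{qnegquarttheorem} and~\ref{q64theorem}, but with the WZ pair now constructed to be a $q$-analogue of Guillera's WZ pair for \eqref{Guillera16squared} rather than of the pair in \eqref{rewriteGFR}. Guided by the rewriting of Guillera's $F$-function with the $k$-indexed factor \eqref{firstlabelqGuillera} replaced by \eqref{EKHADqGuillera}, I would first write down an explicit $\mathscr{F}(n,k,q)$ of the form
\begin{equation*}
 \mathscr{F}(n,k,q) = S(n,q)\,q^{k}
 \left[ \begin{matrix} q^{\frac{1}{2}}, q^{\frac{1}{2}}, q^{\frac{1}{2}} \vspace{1mm} \\
 q^{\frac{3}{2}-2n},\, q^{1+n},\, q^{1+n} \end{matrix} \, \Bigg| \, q \right]_{k},
\end{equation*}
where $S(n,q)$ is the $q$-analogue of the single-variable prefactor $-512(-1/16)^n[\,-1/4,1/4,1/2,1/2,1/2;1,1,1,1,1]_n\,n^{3}$ in Guillera's $F$. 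Running the $q$-Zeilberger algorithm on this input produces a first-order recurrence with certificate coefficients $p_{1}(n,q)$ and $p_{2}(n,q)$; the normalization \eqref{lettingoverlinescrF} then yields a genuine $q$-WZ pair $(F(n,k;q),G(n,k;q))$ with $G=R\cdot F$ for an explicit $q$-rational certificate $R(n,k;q)$. Both the WZ difference equation $F(n+1,k;q)-F(n,k;q) = G(n,k+1;q)-G(n,k;q)$ and its extension to $\eta = n+p$ with $0<|p|<1$ can be verified directly by clearing denominators, exactly as in Theorem~\ref{qnegquarttheorem}.

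With the pair in hand, I would apply Zeilberger's identity in the form used in Theorem~\ref{q64theorem}: setting $H(n,k;q) := F(n+1,n+k;q)+G(n,n+k;q)$ as in \eqref{Hwithqdefinition}, telescoping the shifted equation over $n=0,1,\ldots,m$ and sending $m\to\infty$, $p\to 0$ gives
\begin{equation*}
 \lim_{p \to 0} \sum_{n=0}^{\infty} G(n+p, 0; q) \;=\; \lim_{p \to 0} \sum_{n=0}^{\infty} H(n+p, 0; q),
\end{equation*}
provided the boundary terms $\lim_{p\to 0}F(p,k;q)$ and $\lim_{m\to\infty}F(m+p,k;q)$ vanish. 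The first vanishing comes from the factor $(1-q^{n})^{2}$ type prefactor inherent to $S(n,q)$, and the second follows from the $q^{2n^{2}}$-type Gaussian decay in $S(n,q)$ dominating the $k$-indexed denominator $(q^{\frac{3}{2}-2n};q)_{k}$, by exactly the same estimate as in the proof of Theorem~\ref{qnegquarttheorem}. Carlson's theorem \cite[\S5.3]{Bailey1935} then promotes the equality to all $k$ (including $k=1/2$), whereupon the $H$-sum collapses after extracting the $n=0$ term exactly as in \eqref{againCarlsonGH}.

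The step that I expect to be the main obstacle is specifying the $q$-WZ pair whose certificate is explicit enough that the telescoping gives precisely the $q$-polynomial $\rho(n,q)$ appearing in the statement. The $q$-Zeilberger output on the trial input above will produce a first-order recurrence, but the rational function $R(n,k;q)$ emerging from \eqref{lettingoverlinescrF} must be chosen so that setting $k=0$ on the $G$-side reproduces the left-hand sum and setting $k=1/2$ reduces, after a $q\mapsto q^{2}$ substitution, to the right-hand product $(q-1)^{2}(q+1)^{4}\left[q,q,q;q^{2},q^{2},q^{3}\mid q^{2}\right]_{1/2}$. In practice this means tracking the exact form of $R$ through the normalization procedure and verifying the specific polynomial identity that collapses $R(n,k;q)F(n,k;q)\big|_{k=0}$ to the stated summand containing $\rho(n,q)$; this is a routine but lengthy computer-algebra verification.

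Finally, to interchange $\lim_{p\to 0}$ with the infinite summations on both sides, I would apply the monotone convergence argument used in \eqref{interchangelimsumG}--\eqref{integralidentity}: on the appropriate eventual-monotonicity range the real and imaginary parts of $G(n+\frac{1}{\ell+1/2},\frac{1}{2};q)$ form a monotone sequence dominated by an integrable envelope (in the counting-measure sense), which legitimates the exchange and reduces the identity to the $p\to 0$ value of the single surviving boundary term. The substitution $q\mapsto q^{2}$ then puts the identity into the form stated in Theorem~\ref{qGuillerafirst}, and the Ramanujan-type limit $q\to 1$ together with \eqref{limitqPochhammer} and \eqref{closedqGamma} confirms that this $q$-identity is indeed a $q$-analogue of \eqref{Guillera16squared}.
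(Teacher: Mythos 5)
Your construction of the $q$-WZ pair is the right one and matches the paper's: the proof does indeed begin by writing down an explicit $F(n,k;q)$ consisting of a $q$-analogue of Guillera's single-variable prefactor times the {\tt EKHAD}-normalized factor \eqref{EKHADqGuillera}, together with an explicit $q$-rational certificate $R(n,k;q)$ and $G := R\cdot F$, and the analytic scaffolding you invoke (telescoping over $\eta = n+p$, vanishing boundary terms, Carlson's theorem, monotone convergence, final substitution $q\mapsto q^2$) is exactly the paper's.

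However, you route the final evaluation through the wrong half of the machinery. You propose to use the identity $\lim_{p\to 0}\sum_{n} G(n+p,0;q) = \lim_{p\to 0}\sum_{n} H(n+p,0;q)$ with $H$ as in \eqref{Hwithqdefinition}, i.e.\ ``the form used in Theorem~\ref{q64theorem}.'' For this particular WZ pair, that $H$-identity is precisely what produces the $q$-analogue of \eqref{firstGuillera} (convergence rate $-\tfrac{1}{1024}$), which is Theorem~\ref{qGuillerasecond}, not Theorem~\ref{qGuillerafirst}. The series in Theorem~\ref{qGuillerafirst} is a $q$-analogue of the rate-$\tfrac{1}{16}$ series \eqref{Guillera16squared} and is $\lim_{p\to 0}\sum_n G(n+p,0;q)$ itself; the paper proves it by the template of Theorem~\ref{qnegquarttheorem}: telescoping the WZ equation in $k$ shows $\sum_n G(n+p,k;q)$ is independent of $k$, Carlson's theorem extends this constancy to non-integer $k$, and at $k=\tfrac12$ the $G$-sum collapses (all terms with $n\geq 1$ vanish) to the single boundary term $\lim_{p\to 0} G\bigl(p,\tfrac12;q\bigr)$, which is the stated closed form. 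No $H$-function is needed. Relatedly, your statement that ``the $H$-sum collapses after extracting the $n=0$ term exactly as in \eqref{againCarlsonGH}'' inverts that display: in \eqref{againCarlsonGH} it is the $G$-sum that has collapsed to $G\bigl(p,\tfrac12;q\bigr)$, while the $H$-sum survives as the series being evaluated. As written, your argument would either prove the wrong theorem or rely on a collapse that does not occur; dropping the $H$-step and applying Carlson's theorem directly to the $k$-constancy of the $G$-sum repairs it.
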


\begin{proof}
 By setting $ F(n, k; q)$ as 
\begin{multline*}
 8 q^{k+n^2} 
 \frac{\left(q^{\frac{1}{2}}+1\right)^5 \left(1-q^n\right)^3}{\left(1-q^{\frac{1}{2}}\right)^3 q^{\frac{1}{2}}} 
 \left[ \begin{matrix} q^{\frac{1}{2}}, q^{\frac{1}{2}}, q^{\frac{1}{2}} \vspace{1mm} \\ 
 -1, -1, -1, -q^{\frac{1}{2}}, -q^{\frac{1}{2}}, -q^{\frac{1}{2}}, q, q, q, q, q 
 \end{matrix} \, \Bigg| \, q \right]_{n} \\ 
 \times \left[ \begin{matrix} q^{-\frac{1}{2}}, q^{\frac{1}{2}} 
 \end{matrix} \, \Bigg| \, q^2 \right]_{n} 
 \left[ \begin{matrix} q^{\frac{1}{2}}, q^{\frac{1}{2}}, q^{\frac{1}{2}} \vspace{1mm} \\ 
 q^{\frac{3}{2} - 2 n}, q^{1+n}, q^{1+n} \end{matrix} \, \Bigg| \, q \right]_{k}, 
\end{multline*}
 we obtain the rational certificate $R(n, k; q)$ equal to 
 $$ \frac{q^{\frac{1}{2}-k} \left(q^{k+1}-q^{2 n+\frac{1}{2}}\right)}{ q^{3/2} \left(1-q^{2 
 n}\right)^3 \left(1+q^{\frac{2 n + 1}{2} }\right)^3 } \tau(n, k) $$
 for the $q$-polynomial
\begin{multline*}
 \tau(n, k) := 
 -3 q^{k+2 n+\frac{1}{2}}-3 q^{k+3 n+1}+q^{k+4 n+\frac{1}{2}}-2 q^{k+4 n+\frac{3}{2}} + 
 3 q^{k+5 n+1} \\ 
 + 3 q^{k+6 n+\frac{3}{2}}+q^{k+7 n+2}+3 
 q^{n+\frac{1}{2}}+3 q^{2 n+1}-2 q^{3 n+\frac{1}{2}}+q^{3 n+\frac{3}{2}}-3 q^{4 n+1}-3 q^{5 n+\frac{3}{2}}+1. 
\end{multline*}
 By setting $G(n, k; q) := R(n, k; q) F(n, k; q)$, we may then following a similar approach as in our proof of 
 Theorem~\ref{qnegquarttheorem}. 
\end{proof}

 Multiplying both sides of the identity in Theorem~\ref{qGuillerafirst} by $\frac{(1+q)^2}{4 (1-q)^2 q^2}$ and letting $q$ 
 approach $1$, the summand for the series in Theorem~\ref{qGuillerafirst} for a given natural number $n$
 approaches the summand of the Guillera series in \eqref{Guillera16squared}
 for the same index $n$, 
 and we may apply the $q$-Gamma relation in \eqref{closedqGamma}
 to obtain a multple of $\frac{1}{\pi^2}$. 

 For the same $q$-WZ pair $(F, G)$ involved in our proof of Theorem~\ref{qGuillerafirst}, 
 according to the relation 
\begin{equation*}
 \lim_{p \to 0} \sum_{n=0}^{\infty} G(n+p, 0; q) = \lim_{p \to 0} \sum_{n=0}^{\infty} 
 H(n+p,0; q). 
\end{equation*}
 for $H$ as in \eqref{Hwithqdefinition}, 
 this can be used to obtain a $q$-analogue of 
 the Guillera formula 
\begin{equation*}
 \frac{128}{\pi ^2} = \sum_{n = 0}^{\infty} 
 \left( -\frac{1}{1024} \right)^{n} 
 \left[ \begin{matrix} 
 \frac{1}{2}, \frac{1}{2}, \frac{1}{2}, \frac{1}{2}, \frac{1}{2} \vspace{1mm} \\ 
 1, 1, 1, 1, 1 \end{matrix} \right]_{n} (820 n^2+180 n+13), 
\end{equation*}
 according to a similar approach as in our proof of Theorem~\ref{qGuillerafirst}. 

\begin{theorem}\label{qGuillerasecond}
 For the $q$ polynomials 

 \ 

\noindent $\rho_{1}(n, q)$ $=$ 
 $ q^{2 (n+1)^2} $ $+$ 
 $ 5 q^{2 (n+1) (n+3)}$ $ -$ 
 $q^{2 n^2+2 n+3}$ $-$ 
 $ 3 q^{2 n^2+4 n+4}$ $+$ 
 $ q^{2 n^2+6 n+2}$ $+$ 

\noindent $3 q^{2 n^2+6 n+3}$ $-$ 
 $q^{2 n^2+6 n+5}$ $-$ 
 $q^{2 n^2+8 n+1}$ $+$ 
 $ 3 q^{2 n^2+8 n+3} $ $ + $
 $ 6 q^{2 n^2+8 n+4} $ $ - $ 

\noindent $ 3 q^{2 n^2+10 n+2}$ $ - $ $ 5 q^{2 n^2+10 n+3}$ $ + $ 
 $ q^{2 n^2+10 n+4} $ $ + $ 
 $ 5 q^{2 n^2+10 n+7} $ $ - $ 
 $ 6 q^{2 n^2+12 n+3} $ $ - $ 

\noindent $ 6 q^{2 n^2+12 n+4} $ $ - $
 $ 5 q^{2 n^2+12 n+5}$ $ - $
 $ 6 q^{2 n^2+12 n+6}$ $ - $
 $ q^{2 n^2+12 n+8} $ $ + $
 $ 5 q^{2 n^2+14 n+2} $ $ + $ 

\noindent $ q^{2 n^2+14 n+3} $ $ - $
 $ 2 q^{2 n^2+14 n+5} $ $ - $ 
 $ 5 q^{2 n^2+14 n+6} $ $ - $ 
 $ 17 q^{2 n^2+14 n+7} $ $ - $
 $ 3 q^{2 n^2+14 n+9} $ $ + $
 
\noindent $ 6 q^{2 n^2+16 n+3} $ $+ $ 
 $ 3 q^{2 n^2+16 n+4} $ $+ $
 $ 6 q^{2 n^2+16 n+5}$ $+ $
 $ 17 q^{2 n^2+16 n+6} $ $ + $
 $ q^{2 n^2+16 n+7} $ $ + $ 

\noindent $ 2 q^{2 n^2+16 n+8} $ $ - $ 
 $ q^{2 n^2+16 n+10} $ $ - $
 $ q^{2 n^2+18 n+2} $ $ + $
 $ 2 q^{2 n^2+18 n+4} $ $ + $ 
 $ q^{2 n^2+18 n+5} $ $ + $

\noindent $ 17 q^{2 n^2+18 n+6} $ $ + $ 
 $ 6 q^{2 n^2+18 n+7} $ $ + $
 $ 3 q^{2 n^2+18 n+8} $ $ +$ 
 $ 6 q^{2 n^2+18 n+9} $ $ - $
 $ 3 q^{2 n^2+20 n+3} $ $ - $

\noindent $ 17 q^{2 n^2+20 n+5} $ $ - $
 $ 5 q^{2 n^2+20 n+6} $ $ - $
 $ 2 q^{2 n^2+20 n+7} $ $ + $
 $ q^{2 n^2+20 n+9} $ $ + $
 $ 5 q^{2 n^2+20 n+10} $ $ - $

\noindent $ q^{2 n^2+22 n+4} $ $ - $ 
 $ 6 q^{2 n^2+22 n+6} $ $ - $ 
 $ 5 q^{2 n^2+22 n+7} $ $ - $ 
 $ 6 q^{2 n^2+22 n+8} $ $ - $
 $ 6 q^{2 n^2+22 n+9} $ $ + $

\noindent $ 5 q^{2 n^2+24 n+5} $ $ + $
 $ q^{2 n^2+24 n+8} $ $ - $ 
 $ 5 q^{2 n^2+24 n+9} $ $ - $
 $ 3 q^{2 n^2+24 n+10} $ $ + $
 $ 5 q^{2 n^2+26 n+6} $ $ + $ 

\noindent $ 6 q^{2 n^2+26 n+8} $ $ + $
 $ 3 q^{2 n^2+26 n+9} $ $ - $
 $ q^{2 n^2+26 n+11} $ $ - $
 $ q^{2 n^2+28 n+7} $ $ + $
 $ 3 q^{2 n^2+28 n+9} $ $ + $

\noindent $ q^{2 n^2+28 n+10} $ $ - $
 $ 3 q^{2 n^2+30 n+8} $ $ + $
 $ q^{2 n^2+30 n+10} $ $ - $ 
 $ q^{2 n^2+32 n+9} $

 \ 

\noindent and $\rho_{2}(n, q) = 1-q^{4 n-1}$, the relation 
\begin{multline*}
 \sum_{n=0}^{\infty} 
 \left[ \begin{matrix} q, q, q, q, q, q \vspace{1mm} \\ 
 -q^2, -q^2, -q^2, q^2, q^2, q^2, -q^3, -q^3, -q^3, q^4, q^4, q^{3-4n}, q^{4+2n}, q^{4+2n}
 \end{matrix} \, \Bigg| \, q^2 \right]_{n} \\ 
 \times \big[ \begin{matrix} q^{-1}, q 
 \end{matrix} \, \big| \, q^4 \big]_{n} \frac{\rho_1(n, q)}{ \rho_2(n ,q) } = 
 (1-q)^3 q^2 (q+1)^4 \left(1-q^2\right)^2 
 \left[ \begin{matrix} q, q, q \vspace{1mm} \\ 
 q^2, q^2, q^3 \end{matrix} \, \Bigg| \, q^2 \right]_{\frac{1}{2}} 
\end{multline*}
 holds for $q$ such that $|q| < 1$. 
\end{theorem}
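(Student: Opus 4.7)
The plan is to reuse the $q$-WZ pair $(F, G)$ constructed in the proof of Theorem~\ref{qGuillerafirst} and to mimic the bridge from Theorem~\ref{qnegquarttheorem} to Theorem~\ref{q64theorem}: instead of closing the telescoping via the half-shifted value $\lim_{p \to 0} G(p, \tfrac{1}{2}; q)$, apply the $q$-analogue of the Guillera--Zeilberger identity in \eqref{mainGH}, namely
$$\lim_{p \to 0} \sum_{n=0}^{\infty} G(n+p, 0; q) = \lim_{p \to 0} \sum_{n=0}^{\infty} H(n+p, 0; q),$$
where $H(n, k; q) := F(n+1, n+k; q) + G(n, n+k; q)$ as in \eqref{Hwithqdefinition}. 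At the classical limit, this switch from the ``$G$-side'' to the ``$H$-side'' converts the convergence rate $\frac{1}{16}$ seen in Theorem~\ref{qGuillerafirst} into the rate $-\frac{1}{1024}$ appearing in Guillera's formula for $\frac{128}{\pi^2}$, which is exactly what the classical limit of the new theorem should produce.

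The core computational step is to perform the diagonal substitution $H(n, 0; q) = F(n+1, n; q) + G(n, n; q)$, pull out the maximal common $q$-Pochhammer envelope using $(a; q)_{n+1} = (1 - aq^n)(a; q)_n$ and the standard index-reversal identities among shifted $q$-Pochhammer symbols, and identify the resulting quotient with $\rho_1(n, q)/\rho_2(n, q)$. The remarkable length of $\rho_1$ is the direct fingerprint of combining $F(n+1, n; q)$ and $G(n, n; q) = R(n, n; q) F(n, n; q)$ over the common denominator $\rho_2(n, q) = 1 - q^{4n-1}$; the appearance of $q^{4+2n}$ twice in the bracketed product in the statement reflects the tensor-square contribution from the $(q^{1+n}, q^{1+n})$ pair in the denominator of $F(n, k; q)$ once $k$ is set equal to $n$ and the index is shifted. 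I expect this algebraic simplification to be the main obstacle: it is purely mechanical given the explicit forms of $F$ and $R$, but producing the exact monomial expansion of $\rho_1$ requires careful bookkeeping and is best carried out in a computer algebra system.

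Once the shape of $H(n, 0; q)$ has been verified, the analytic apparatus transfers essentially verbatim from the earlier proofs. The vanishings $\lim_{p \to 0} F(p, k; q) = 0$ and $\lim_{m \to \infty} F(m + p, k; q) = 0$ depend only on the prefactors of $F$, which are unchanged from Theorem~\ref{qGuillerafirst}, so the telescoping argument that produces the displayed $G$-versus-$H$ identity remains in force. I would then reuse the monotone convergence argument of Theorem~\ref{qnegquarttheorem}, splitting into real and imaginary parts along $p = 1/(\ell + \tfrac{1}{2})$, to interchange the limit $p \to 0$ with the summation on the $H$-side, and apply the substitution $q \mapsto q^2$ to put the identity in the normalized form stated. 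Dividing by an appropriate power of $1-q$ and letting $q \to 1$ then recovers Guillera's classical $\frac{128}{\pi^2}$ formula via the $q$-Gamma limit \eqref{closedqGamma}, confirming that the identity is indeed a $q$-analogue of the target series.
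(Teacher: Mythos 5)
Your proposal matches the paper's own route: the paper proves Theorem~\ref{qGuillerasecond} ``by analogy with the proof of Theorem~\ref{q64theorem},'' i.e., by reusing the $q$-WZ pair from Theorem~\ref{qGuillerafirst} and passing to the $H$-side via \eqref{Hwithqdefinition} and \eqref{GHgives}, with Carlson's theorem and the monotone-convergence interchange supplying the closed form exactly as you describe. Your account of the diagonal substitution $H(n,0;q)=F(n+1,n;q)+G(n,n;q)$ and the resulting bookkeeping for $\rho_1/\rho_2$ is in fact more explicit than what the paper records.
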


\begin{proof}
 This may be proved by analogy with the proof of Theorem~\ref{q64theorem}. 
\end{proof}

\section{$q$-generators}\label{sectionqgenerators}
 We have discovered how our {\tt EKHAD}-normalization method can    be applied to obtain further $q$-analogues derived     using WZ   
  pairs introduced  in the work of     Guillera on WZ generators for Ramanujan's formulas   for $\frac{1}{\pi}$~\cite{Guillera2006}.   
  In this regard, 
    Theorem~\ref{Theoremposquart1} below gives a 
 $q$-analogue of Ramanujan's formula for $\frac{1}{\pi}$ of convergence rate $\frac{1}{4}$, 
 and Guillera's derivation of this result according to the $j = 1$ case given by Guillera's classification system~\cite{Guillera2006}
 relies on a WZ pair involving
 $$ G(n, k) = 
 \frac{ (-1)^{n} (-1)^{k} }{2^{6n} 2^{2k}} 
 \frac{ \binom{2n}{n}^{3} \binom{2k}{k}^{2} }{2^{2k} \binom{n-1/2}{k} \binom{n+k}{n} } (4n+1). $$
 Theorem~\ref{Theoremposquart2} below 
 relies on the $j = 2$ case according to Guillera's classification system and the associated WZ pair involving
 $$ G(n, k) = \frac{(-1)^{k} }{2^{8n} 2^{2k}} \frac{ \binom{2n}{n}^{2} 
 \binom{2k}{k} \binom{2n+2k}{n+k} }{2^{2k} \binom{n-1/2}{k}} (6n+2k+1). $$

\begin{theorem}\label{Theoremposquart1}
 The $q$-series identity in Section~\ref{subsectionmotivating} holds for $|q| < 1$. 
\end{theorem}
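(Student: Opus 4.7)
The plan is to follow the template driving the proofs of Theorems~\ref{qnegquarttheorem} and~\ref{qGuillerafirst}, now applied to Guillera's WZ pair of convergence rate $\frac{1}{4}$ associated with the $j = 1$ case whose $G$-function is displayed immediately above the statement. First, I would solve the WZ difference equation recursively in $n$ for that $G(n, k)$, rewrite the resulting $F(n, k)$ in Pochhammer form, and isolate its $k$-dependent factor, which has the shape
\[
\left[ \begin{matrix} \frac{1}{2}, \frac{1}{2} \vspace{1mm} \\ \frac{3}{2} - 2n,\, 1 + n \end{matrix} \right]_{k}
\]
multiplied by an $n$-only prefactor. I would then build the $q$-analogue of this $k$-block via the prescription \eqref{setFnkq} with $p(k) = k$, feed it to the {\tt EKHAD} Maple package to extract the first-order $q$-difference-equation coefficients $p_{1}(n, q)$ and $p_{2}(n, q)$, and use \eqref{lettingoverlinescrF} to produce a normalized summand $\overline{\mathscr{F}}(n, k; q)$. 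A second run of the $q$-Zeilberger algorithm on $\overline{\mathscr{F}}$ then returns a $q$-rational certificate $R(n, k; q)$ and a companion $\overline{\mathscr{G}}(n, k; q) := R(n, k; q)\,\overline{\mathscr{F}}(n, k; q)$ satisfying the $q$-WZ difference equation.

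With this $q$-WZ pair in hand, I would introduce the continuous shift $\eta = n + p$ exactly as in the proof of Theorem~\ref{qnegquarttheorem}, telescope from $n = 0$ to $n = m$, and push $m \to \infty$ followed by $p \to 0$. The vanishing of both boundary terms $\lim_{p \to 0} \overline{\mathscr{F}}(p, k; q)$ and $\lim_{m \to \infty} \overline{\mathscr{F}}(m + p, k; q)$ should follow from the factor $q^{n^{2}}$ in the numerator together with a $(q^{3/2 - 2n}; q)_{k}$-type factor in the denominator, producing a zero at $n = 0$ and exponential decay as $m \to \infty$, by the same argument used to establish \eqref{minfinityp0}. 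This yields
\[
\lim_{p \to 0} \sum_{n = 0}^{\infty} \overline{\mathscr{G}}(n + p, k; q) = \lim_{p \to 0} \sum_{n = 0}^{\infty} \overline{\mathscr{G}}(n + p, k + 1; q).
\]

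I would then invoke Carlson's theorem~\cite[\S5.3]{Bailey1935} to extend the common value to non-integer $k$ and specialize to $k = \frac{1}{2}$; as in Theorem~\ref{qnegquarttheorem}, the $n = 0$ contribution $\lim_{p \to 0} \overline{\mathscr{G}}(p, \frac{1}{2}; q)$ delivers the closed form on the right-hand side, while the tail sum $\sum_{n \geq 1} \overline{\mathscr{G}}(n + p, \frac{1}{2}; q)$ is passed through the $p \to 0$ limit term-by-term via the same monotone convergence argument justifying \eqref{negquartlastdisplay}, after splitting into real and imaginary parts as around \eqref{202509061014AMqqq2A}--\eqref{integralidentity}. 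A final substitution $q \mapsto q^{2}$ rebases the Pochhammer blocks into the $q^{2}$-base form appearing in the statement, and converts $\lim_{p \to 0} \overline{\mathscr{G}}(p, \frac{1}{2}; q)$ into the right-hand side $\frac{(1 - q)^{2}(1 + q)}{q} \left[ \begin{matrix} q, q \\ q^{2}, q^{3} \end{matrix} \, \big| \, q^{2} \right]_{1/2}$.

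The main obstacle will be the explicit $q$-Zeilberger computation of the normalizing coefficients $p_{1}(n, q)$ and $p_{2}(n, q)$ and of the ensuing certificate $R(n, k; q)$: the output is a lengthy $q$-polynomial that, after the rebase $q \mapsto q^{2}$, must align precisely with the five-term numerator Pochhammer block and with the rational prefactor $(1 - q^{-1 + 2n})(2 q^{1 + 4n} - q^{1 + 2n} - 1)/(1 + q^{1 + 2n})$ appearing in the statement. Once this symbolic alignment and the vanishing of the boundary terms under the continuous $p$-shift are verified, the Carlson and monotone-convergence steps transport essentially verbatim from the proof of Theorem~\ref{qnegquarttheorem}.
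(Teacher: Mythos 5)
Your overall template---rewrite the $F$-entry of the relevant WZ pair in Pochhammer form, $q$-deform the $k$-block via \eqref{setFnkq}, normalize via \eqref{lettingoverlinescrF}, then run the telescoping, Carlson, and monotone-convergence machinery of Theorem~\ref{qnegquarttheorem}---is exactly what the paper intends; its own proof of Theorem~\ref{Theoremposquart1} is a one-line instruction to apply {\tt EKHAD}-normalization to a specified input ``by analogy with the preceding proofs.'' The problem is that the one piece of information that proof actually supplies is the input, and you have the wrong one. You propose the block
\[
\left[ \begin{matrix} \tfrac{1}{2}, \tfrac{1}{2} \vspace{1mm} \\ \tfrac{3}{2} - 2n,\ 1 + n \end{matrix} \right]_{k},
\]
but that is the $k$-block of Guillera's 2002 pair \eqref{GuilleraFRamanujan}--\eqref{rewriteGFR}, whose denominator parameter $\tfrac{3}{2}-2n$ arises from the factor $\binom{4n-2k}{2n-k}$; normalizing its $q$-analogue \eqref{qofPochammerk} is precisely what yields Theorem~\ref{qnegquarttheorem}, i.e.\ a $q$-analogue of the rate-$\left(-\tfrac{1}{4}\right)$ series \eqref{Ramanujannegquart}. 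Theorem~\ref{Theoremposquart1} instead rests on the $j=1$ generator pair from \cite{Guillera2006}, whose $G$-function (displayed just above the theorem) carries $\binom{n-1/2}{k}$ in the denominator; this contributes a Pochhammer parameter that is linear in $n$ with coefficient $-1$, not $-2$, and the paper accordingly normalizes
\[
q^{k} \left[ \begin{matrix} q^{\frac{1}{2}}, q^{\frac{1}{2}} \vspace{1mm} \\ q^{\frac{3}{2} - n}, q^{1+n} \end{matrix} \, \Bigg| \, q \right]_{k}.
\]
You can confirm the $-n$ (rather than $-2n$) dependence directly from the target identity in Section~\ref{subsectionmotivating}, whose denominator parameters $q^{1-2n}$ and $q^{2+2n}$ in base $q^{2}$ rebase to $\tfrac{1}{2}-n$ and $1+n$ in base $q$. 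With your input, the whole computation would land back on a $q$-analogue of \eqref{Ramanujannegquart} rather than of the rate-$\tfrac{1}{4}$ series \eqref{eqRamanujanposquart} that this theorem addresses. The remainder of your outline (boundary-term vanishing, Carlson's theorem at $k=\tfrac{1}{2}$, monotone convergence, the final rebase $q\mapsto q^{2}$) matches the paper's intended argument once the correct input is used.
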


\begin{proof}
 This can be shown through the application of {\tt EKHAD}-normalization to 
\begin{equation*}
 q^{k} \left[ \begin{matrix} q^{\frac{1}{2}}, q^{\frac{1}{2}} \vspace{1mm} \\ 
 q^{\frac{3}{2} - n}, q^{1+n} \end{matrix} \, \Bigg| \, q \right]_{k}, 
\end{equation*}
 by analogy with the preceding proofs. 
\end{proof}

\begin{theorem}\label{Theoremposquart2}
 The $q$-identity 
\begin{multline*}
 \sum_{n=0}^{\infty} 
 q^{2 n^2} \frac{ (q - q^{2n}) (2q^{1+4n} - q^{1+2n} - 1 ) }{ (1+q^{2n}) (1 + q^{1+2n}) } \\
 \times \left[ \begin{matrix} \frac{1}{q}, q, q \vspace{1mm} \\ 
 -1, -q, q^2, q^2, q^2 \end{matrix} \, \Bigg| \, q^2 \right]_{n} 
 = 
 \frac{(1-q)^2 (1+q) }{2} \left[ \begin{matrix} q, q \vspace{1mm} \\ 
 q^2, q^3 \end{matrix} \, \Bigg| \, q^2 \right]_{\frac{1}{2}}
\end{multline*}
 holds for $|q| < 1$. 
\end{theorem}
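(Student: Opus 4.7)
The plan is to follow the proof of Theorem~\ref{Theoremposquart1} (and hence of Theorem~\ref{qnegquarttheorem}) closely, applying {\tt EKHAD}-normalization to Guillera's $j=2$ WZ pair displayed just above Theorem~\ref{Theoremposquart1}. First, I would rewrite that $G(n,k)$ in the style of \eqref{rewriteGFR}, isolating the $k$-indexed Pochhammer block whose arguments are degree-$1$ polynomials in $n$. Applying the standard binomial-to-Pochhammer identities $\binom{2k}{k}=4^{k}(1/2)_{k}/k!$, $\binom{2n+2k}{n+k}/\binom{2n}{n}=4^{k}(n+1/2)_{k}/(n+1)_{k}$, and $\binom{n-1/2}{k}^{-1}=(-1)^{k}k!/(1/2-n)_{k}$, one obtains a cleanly separated $k$-factor whose $q$-analogue, of the schematic shape $q^{k}\bigl[q^{1/2},q^{n+1/2};q^{3/2-n},q^{1+n}\,\big|\,q\bigr]_{k}$, is the object to which {\tt EKHAD}-normalization is applied.

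Next, the $q$-version of Zeilberger's algorithm, as implemented in the {\tt EKHAD} package, produces a first-order recurrence on this $q$-factor with polynomial coefficients $p_{1}(n,q)$ and $p_{2}(n,q)$, and the normalization \eqref{lettingoverlinescrF} converts these into a $q$-WZ pair $(F(n,k;q),G(n,k;q))$ with rational certificate $R(n,k;q)$, whose $q$-WZ difference equation is then verified symbolically. With the $q$-WZ pair in hand, I would imitate the telescoping argument used in the proof of Theorem~\ref{qnegquarttheorem}: shift $n\mapsto n+p$ for $|p|<1$, telescope over $n=0,\ldots,m$, and pass $m\to\infty$ followed by $p\to 0$. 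The vanishing of $\lim_{p\to 0}F(p,k;q)$ is supplied by a $(1-q^{n})$-type factor in $F$, and the vanishing of $\lim_{m\to\infty}F(m+p,k;q)$ is supplied by a $q^{cn^{2}}$ super-exponential factor.

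Carlson's theorem then extends the telescoped identity to the non-integer value $k=\tfrac{1}{2}$, and the monotone convergence theorem (applied as in the proof of Theorem~\ref{qnegquarttheorem}, after splitting into real and imaginary parts and reducing to an eventually monotone tail) justifies exchanging the $p\to 0$ limit with the infinite sum on the $G$-side. After the substitution $q\mapsto q^{2}$, one obtains the identity in the form displayed in the theorem.

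The main obstacle, as in the companion theorems of this paper, will be the symbolic bulk rather than any genuinely novel argument: the rational certificate $R(n,k;q)$ is expected to have a sizable $q$-polynomial numerator comparable to the $\tau(n,k)$ appearing in the proof of Theorem~\ref{qGuillerafirst}, so the symbolic verification of the $q$-WZ difference equation, while mechanical, is cumbersome. A secondary calibration issue is fixing the overall $n$-independent normalizing scalar in $F(n,k;q)$ so that $\lim_{p\to 0}G(p,\tfrac{1}{2};q)$, after the substitution $q\mapsto q^{2}$, reproduces the closed-form right-hand side $\tfrac{(1-q)^{2}(1+q)}{2}\bigl[q,q;q^{2},q^{3}\,\big|\,q^{2}\bigr]_{1/2}$; this is done by direct evaluation using the defining relation between $q$-Pochhammer symbols of fractional index and $\Gamma_{q}$, exactly as in the reduction following the proof of Theorem~\ref{qnegquarttheorem}.
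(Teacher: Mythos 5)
Your proposal takes essentially the same route as the paper: the paper's proof of Theorem~\ref{Theoremposquart2} consists exactly of applying {\tt EKHAD}-normalization to the $k$-indexed block
$q^{k}\left[\begin{smallmatrix} q^{1/2},\, q^{1/2+n} \\ q^{3/2-n},\, q^{1+n}\end{smallmatrix}\,\big|\,q\right]_{k}$
extracted from Guillera's $j=2$ WZ pair --- precisely the object you identify --- and then invoking the template of Theorem~\ref{qnegquarttheorem} (telescoping in a shifted variable $n+p$, Carlson's theorem at $k=\tfrac12$, the monotone convergence interchange, and the final substitution $q\mapsto q^{2}$). Your write-up is a faithful, more detailed expansion of the paper's one-line ``by analogy with the preceding proofs.''
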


\begin{proof}
 This can be shown through the application of {\tt EKHAD}-normalization to 
\begin{equation*}
 q^{k} \left[ \begin{matrix} q^{\frac{1}{2}}, q^{\frac{1}{2}+n} \vspace{1mm} \\ 
 q^{\frac{3}{2} - n}, q^{1+n} \end{matrix} \, \Bigg| \, q \right]_{k}, 
\end{equation*}
 by analogy with the preceding proofs. 
\end{proof}

\section{Conclusion}\label{sectionConclusion}
 Our {\tt EKHAD}-normalization technique can be used to produce $q$-analogues of many further series for $\pi$, and this includes 
 $q$-analogues of further Ramanujan-type series for $\frac{1}{\pi}$ proved via the WZ method by Guillera~\cite{Guillera2006}. For the 
 sake of brevity, we leave this to a separate project, and we briefly conclude by considering how our {\tt EKHAD}-normalization 
 technique can be applied to obtain hypergeometric evaluations for $\frac{1}{\pi}$ and $\frac{1}{\pi^2}$, using variants and 
 extensions of the hypergeometric $F$-functions applied by Guillera~\cite{Guillera2002}. 

 As a generalization of the hypergeometric function in \eqref{firstlabelqGuillera}, we define
\begin{equation}\label{generalization1Conclude}
 \mathcal{F}_{\alpha}(n, k) := \left[ \begin{matrix} 
 \alpha, \frac{1}{2}, 1 - \alpha
 \vspace{1mm} \\ 
 \frac{3}{2} - 2 n, n + 1, n + 1 
 \end{matrix} \right]_{k + \beta n}. 
\end{equation}
 By applying {\tt EKHAD}-normalization, 
 for special cases of the parameters and variables involved in 
 \eqref{generalization1Conclude}, and applying WZ relations as in \eqref{minfinityp0} and 
 \eqref{GHgives}, this leads us to expansions for $\frac{1}{\pi^2}$
 and $\frac{1}{\pi}$ that motivate further applications of our method. 
 For example, 
 applying {\tt EKHAD}-normalization 
 in \eqref{generalization1Conclude} 
 and setting 
 $(\alpha, \beta, k) = \big( \frac{1}{2}, 1, 0 \big)$ 
 and applying the relation in \eqref{mainGH}, 
 we obtain that 
\begin{align*}
 \frac{2048}{\pi ^2} 
 = & \sum_{n = 
 0}^{\infty} 
 \left( \frac{1}{729} \right)^{n} 
 \left[ \begin{matrix} 
 \frac{1}{4}, \frac{1}{4}, \frac{1}{4}, \frac{1}{2}, \frac{1}{2}, \frac{1}{2}, \frac{3}{4}, \frac{3}{4}, \frac{3}{4} 
 \vspace{1mm} \\ 
 1, 1, 1, 1, 1, \frac{4}{3}, \frac{4}{3}, \frac{5}{3}, \frac{5}{3} 
 \end{matrix} \right]_{n} \\
 & 
 \times \text{ \footnotesize{$ \big( 372736 n^6+815616 n^5+682752 n^4+275168 n^3+55536 n^2+5310 n+207 \big)$}}. 
\end{align*}
 Similarly, by 
 applying {\tt EKHAD}-normalization in \eqref{generalization1Conclude}
 and setting 
 $(\alpha, \beta, k) = \big( \frac{1}{4}, 1, 0 \big)$ 
 and applying the relation in 
 \eqref{mainGH}, 
 we obtain that 
\begin{align*}
 \frac{2048}{\pi }
 = & \sum_{n = 
 0}^{\infty} 
 \left( \frac{1}{729} \right)^{n} 
 \left[ \begin{matrix} 
 \frac{1}{4}, \frac{1}{4}, \frac{1}{4}, \frac{1}{2}, \frac{3}{4}, \frac{3}{4}, \frac{3}{4} 
 \vspace{1mm} \\ 
 1, 1, 1, \frac{4}{3}, \frac{4}{3}, \frac{5}{3}, \frac{5}{3} 
 \end{matrix} \right]_{n} \\ 
 & \ \ \ \ \ \ \ \times \text{\footnotesize{$ \big( 186368 n^5+384512 n^4+293344 n^3+100760 n^2+14890 n+651 \big).$}} 
\end{align*}
 Similarly, by applying 
 {\tt EKHAD}-normalization to 
\begin{equation}\label{EKHADfor2764}
 \left[ \begin{matrix} 
 \frac{1}{2}, \frac{1}{2}, 2 n \vspace{1mm} \\ 
 n+1, n+1, 1 
 \end{matrix} \right]_{k}, 
\end{equation}
 we recover Guillera's WZ pair $(F, G)$ whereby 
 $$ F(n, k) = \left[ \begin{matrix} 
 \frac{1}{2}, \frac{1}{2}, \frac{1}{2}, 1 + \frac{k}{2}, \frac{1}{2} + \frac{k}{2} \vspace{1mm} \\ 
 1, 1, 1, k+1, k+1 
 \end{matrix} \right]_{n} 
 \left[ \begin{matrix} 
 \frac{1}{2}, \frac{1}2{} \vspace{1mm} \\ 
 1, 1 
 \end{matrix} \right]_{k} \frac{96 n^3}{2n+k} $$ 
 applied by Guillera~\cite{Guillera2011} to prove the remarkable result 
\begin{equation*}
 \frac{48}{\pi^2} 
 = \sum_{n = 
 0}^{\infty} 
 \left( \frac{27}{64} \right)^{n} 
 \left[ \begin{matrix} 
 \frac{1}{3}, \frac{1}{2}, \frac{1}{2}, \frac{1}{2}, \frac{2}{3} \vspace{1mm} \\ 
 1, 1, 1, 1, 1 \end{matrix} \right]_{n} 
 \big( 74 n^2 + 27 n + 3 \big). 
\end{equation*}
 This leads us to generalize \eqref{EKHADfor2764} by setting 
\begin{equation}\label{inputfor2764}
 \mathcal{F}_{\alpha, \beta_1, \beta_2, \gamma_1, \gamma_2, \gamma_3, \gamma_4}(n, k) 
 = \left[ \begin{matrix} 
 \alpha, 1 - \alpha, (\beta_1 + \beta_2) n + \gamma_1 \vspace{1mm} \\ 
 \beta_1 n + \gamma_2, \beta_2 n + \gamma_3, \gamma_4 
 \end{matrix} \right]_{k + \delta n} 
\end{equation}
 and by applying {\tt EKHAD}-normalization. 
 For example, for the 
 $(\alpha$, 
 $ \beta_1$, 
 $ \beta_2$,
 $ \gamma_1$, 
 $ \gamma_2$, 
 $ \gamma_3$, 
 $ \gamma_4$, 
 $ \delta) $ $ = $ 
 $ \big( \frac{1}{2}$, 
 $ 1$, $ 2$, $ 0$, $ 1$, $ 1$, $ 1$, $ 0 \big)$ case of \eqref{inputfor2764}, 
 we obtain from the WZ relation in 
 \eqref{mainGH} that 
\begin{align*}
 \frac{128}{\pi ^2} 
 = & \sum_{n = 0}^{\infty} 
 \left( \frac{256}{729} \right)^{n} 
 \left[ \begin{matrix} 
 \frac{1}{4}, \frac{1}{4}, \frac{1}{4}, \frac{1}{2}, \frac{1}{2}, \frac{1}{2}, \frac{3}{4}, \frac{3}{4}, \frac{3}{4} \vspace{1mm} \\ 
 1, 1, 1, 1, 1, \frac{4}{3}, \frac{4}{3}, \frac{5}{3} \end{matrix} \right]_{n} \\
 & \times \text{ \footnotesize{ $\big( 7568 n^6+17184 n^5+15256 n^4+6748 n^3+1574 n^2+186 n+9 \big).$}}
\end{align*}

\subsection*{Acknowledgments}
 The author is grateful to acknowledge support from a Killam Fellowship from the Killam Trusts. The author thanks Karl Dilcher and Lin 
 Jiu for a useful discussion concerning this paper. 
 The author is thankful to a number of anonymous Reviewers of this research paper 
 for the detailed commentary provided. 

\subsection*{Data availability statement}
 No supporting data associated with this research paper are available. 

\subsection*{Conflict of interest statement}
 There are no conflicts of interest associated with this research paper.

 \

\noindent John M.\ Campbell

\vspace{0.1in}

\noindent Department of Mathematics and Statistics

\noindent Dalhousie University

\noindent 6299 South St, Halifax, NS B3H 4R2

\vspace{0.1in}

\noindent {\tt jh241966@dal.ca}

\end{document}